\newtheorem{thm}{Theorem}[section]
\newtheorem{lm}[thm]{Lemma}
\newtheorem{prp}[thm]{Proposition}
\newtheorem{cor}[thm]{Corollary}
\theoremstyle{definition}
\newtheorem{defi}[thm]{Definition}
\theoremstyle{remark}
\newtheorem{rem}[thm]{Remark}
\newcommand{\dia}{\diamondsuit}
\newcommand{\uhr}{\!\!\upharpoonright\!\!}
\DeclareMathOperator{\NF}{succ}
\DeclareMathOperator{\ot}{ot}
\DeclareMathOperator{\id}{id}
\DeclareMathOperator{\hgt}{ht}
\begin{document}

\title[Optimal Matrices of Partitions and Souslin trees]{Optimal Matrices of Partitions\\
and an
application to
Souslin trees}
\author{Gido Scharfenberger-Fabian}
\address{Institute of Mathematics and Computer Sciences\\
Ernst-Moritz-Arndt-University\\
Walther-Rathenau-Stra\ss e 47\\
17487 Greifswald, Germany\\
Email: {\tt gsf@uni-greifswald.de}}

\begin{abstract}
The basic result of this note is a statement about the existence of families
of partitions of the set of natural numbers with some favourable properties,
the $n$-optimal matrices of partitions.
We use this to improve a decomposition result for strongly homogeneous
Souslin trees.
The latter is in turn applied to separate strong notions of rigidity of
Souslin trees thereby answering a considerable portion of a question of
Fuchs and Hamkins.
\end{abstract}

\subjclass[2000]{Primary 03E05, Secondary 05A18}

\keywords{Souslin trees, homogeneity, rigidity, partition}

\maketitle

\section*{Introduction}

In many models of set theory, Souslin trees offer a variety of different
homogeneity or rigidity properties.
Probably the most prominent homogeneity property for Souslin trees is
\emph{strong homogeneity} (cf. Section \ref{sec:str_hom} for the definition)
which implies that the tree is in a certain sense minimal with respect to its
automorphism group.
On the other hand, a great number of rigidity notions
(i.e. absence of nontrivial automorphisms) for Souslin trees
and an array of implications between most of them are known.
In this paper, which resulted out of a part of the authors PhD thesis
\cite{diss}, we present some interrelations between the class
of strongly homogeneous Souslin trees and that of free trees,
the latter consisting of those Souslin trees which have the strongest known
rigidity properties.

The key result which leads to these correspondences
is a certain method for decomposing a strongly homogeneous Souslin tree
into $n$ free factors
(Theorem \ref{thm:str_hom=free_x_free}, which is a strengthening of a known
though unpublished result).
This decomposition uses an elementary,
but apparently new combinatorial tool,
an $n$-optimal matrix of partitions,
which we introduce in the first section.
As will be seen in Section 2, there are several ways to decompose a strongly
homogeneous Souslin tree into $n$ free trees.
But the construction we give using an $n$-optimal matrix of partitions
enables us to prove strong consequences about the behaviour of the factors
which finally are used in the third section to separate certain notions
of parametrized rigidity for Souslin trees (which are all weakenings of
freeness) in Corollaries \ref{cor:n-free} and \ref{cor:n-free_not_UBP}.

A few words on the structure of the paper and the assumed background
which differs strongly from section to section.
The first section
is about the very elementary notion of $n$-optimal matrices of
partitions and does not assume any prerequisites.
The other two sections treat Souslin trees and their structural properties.
In Section 2 we review strong homogeneity and freeness for Souslin trees
and prove two decomposition theorems for strongly homogeneous
Souslin trees.
The final section collects several rigidity notions for Souslin trees
(most of them taken from \cite{degrigST}) and gives the aforementioned
separation results.
Some definitions and proofs in Section 3 refer to the technique of forcing
which we do not review here.
And even though we give the necessary definitions concerning Souslin trees
at the beginning of Section 2, some acquaintance with this subject will
certainly enhance the reader's understanding
of the constructions in Section 2
(very good references, also on forcing, are,
e.g. \cite{devlin-johnsbraten,kunen,jechneu}).
Anyway, we have made an effort to write a paper that is accessible to an
audience exceeding the circle of experts on Souslin trees.

\section{Optimal matrices of partitions}

The main idea is as follows:
Consider an infinite matrix with $\omega$  rows and $n$ columns
where $n$ is a natural number larger than 1:
$$
\begin{pmatrix}
P_{0,0}&\ldots&P_{0,m}&\ldots&P_{0,n-1}\\
\vdots&\vdots&\vdots&\vdots&\vdots\\
P_{k,0}&\ldots&P_{k,m}&\ldots&P_{k,n-1}\\
\vdots&\vdots&\vdots&\vdots&\vdots\\
\end{pmatrix}
$$
Suppose that the entries of this matrix are partitions of the set $\omega$
of natural numbers.
We want to choose these partitions in a way such that
(i) we get an infinite set whenever we intersect a finite family of subsets
of $\omega$ coming from (distinct) partitions of a single column and
(ii) we get a singleton whenever we intersect $n$ sets belonging
to partitions each coming from different columns.
In the following definition the latter requirement is stated
in a slightly stronger form: we want to obtain a singleton whenever we
intersect $n$ sets not all coming from the same column.
The construction in the proof of Lemma \ref{lm:opt_mtx_prt} actually yields
matrices that satisfy this stronger condition,
and we will use it in the proof of Proposition \ref{prp:decomp}
to derive an additional result.

\begin{defi}
\label{defi:opt_mtx_prt}
For $n\in\omega$,
an $n$\emph{-optimal matrix of partitions}
is a family $(P_{k,m}\mid k\in\omega,\,m<n)$
of infinite partitions $P_{k,m}=(a_i^{k,m}\mid i\in\omega)$ of $\omega$
with the following properties.
\begin{enumerate}[(i)]
\item \emph{Column-wise agreement:}
For all $m<n$ and all $i:k\to\omega$ where $k\in\omega$,
the intersection $\bigcap_{\ell<k}a_{i(\ell)}^{\ell,m}$ is infinite.
\item $n$\emph{-optimality:}
For all maps $(i,k,m):n\to\omega\times\omega\times n$
with $(k(j),m(j))\neq(k(\ell),m(\ell))$ for all $j<\ell<n$
and $m(j)\neq m(\ell)$ for at least one pair $j,\ell<n$
the intersection
$$\bigcap_{j<n} a^{k(j),m(j)}_{i(j)}\quad\text{ is a singleton.}$$
\end{enumerate}
\end{defi}

Note that if in (ii) the domain of $(i,k,m)$ is restricted
to a proper subset of $n$,
i.e., if we intersect over less than $n$ sets,
then the corresponding intersection has to be infinite as well.

\begin{lm}
\label{lm:opt_mtx_prt}
There is an $n$-optimal matrix of partitions for every natural number $n>1$.
\end{lm}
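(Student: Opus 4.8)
The plan is to reformulate everything in terms of the ``colour functions'' of the partitions. Each partition $P_{k,m}=(a^{k,m}_i\mid i\in\omega)$ is the same data as a function $c_{k,m}\colon\omega\to\omega$, where $a^{k,m}_i=c_{k,m}^{-1}(i)$; so the whole matrix is encoded by assigning to every point $x\in\omega$ its coordinate array $(c_{k,m}(x)\mid k\in\omega,\,m<n)$. Under this translation, condition (i) says that for every $m<n$, every $K\in\omega$ and every target $t\in\omega^{K}$ there are infinitely many $x$ with $c_{\ell,m}(x)=t(\ell)$ for all $\ell<K$; and condition (ii) has two halves: an existence part (every admissible cross-pattern is realized) and a uniqueness part. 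The uniqueness part is best read contrapositively: no two distinct points $x\ne y$ may share a \emph{cross-pattern}, i.e. there is no family of $n$ distinct pairs $(k(j),m(j))$ spanning at least two columns on which $x$ and $y$ agree. Note that taking one pair from each column already forces the map $x\mapsto(c_{k(m),m}(x))_{m<n}$ to be a bijection $\omega\to\omega^{n}$, so the point set is inevitably a copy of $\omega^{n}$; this rules out any naive linear/algebraic construction, since (i) demands unbounded freedom inside each column while (ii) demands that $n$ cross-column coordinates pin a point.

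First I would set up a recursive construction that builds the coordinate arrays of the points $0,1,2,\dots$ one at a time, driven by a bookkeeping enumeration of all \emph{positive requirements}: for (i), each triple $(m,K,t)$ demanding a point with a prescribed finite column-$m$ initial pattern (listed so that each is served infinitely often); for the genuineness of the partitions, each triple $(k,m,i)$ demanding a point with $c_{k,m}=i$ (again infinitely often, guaranteeing infinitely many infinite classes); and for the existence half of (ii), each admissible cross-pattern $S$ demanding a witness. At the stage devoted to a requirement I pick a fresh point $x$, fix the finitely many coordinates dictated by that requirement, and then fill in all remaining (``free'') coordinates of $x$ one position at a time.

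The heart of the matter is the \textbf{uniqueness-preservation lemma}, which governs how the free coordinates are chosen: when I assign a value at a free position $(k,m)$ of the current point $x$, I must avoid every value that would complete a cross-pattern already realized by some earlier point or by a requirement-fixed part of $x$. The key observation is that this forbidden set is always \emph{finite}. Indeed, any dangerous $n$-subset through $(k,m)$ that spans at least two columns uses $n-1$ further positions whose values are already committed; there are only finitely many already-committed positions, hence finitely many such $(n-1)$-subsets, and for each the set of earlier points matching those $n-1$ values is finite, so each forbids at most finitely many values at $(k,m)$. Since $\omega$ is infinite, a legal value always exists. One checks that a requirement's fixed coordinates can never themselves create a clash: column requirements fix coordinates in a single column, so every cross-pattern they touch passes through a free coordinate that I control, while a cross-existence requirement is only posed when its pattern is not yet realized (otherwise both the existence and uniqueness demands for it are already met).

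I expect the main obstacle to be exactly this tension between the \emph{density} of the positive requirements and the \emph{global} nature of the negative uniqueness constraint: one must verify that serving one requirement never sabotages another and that every requirement is eventually served, which the finiteness-of-forbidden-values lemma together with a fair bookkeeping delivers. A pleasant feature of this scheme is that it maintains the stronger form of (ii) verbatim --- singletons arise for \emph{any} $n$ distinct partitions not all from one column, not merely for one-per-column choices --- which is precisely the extra structure to be exploited later in Proposition~\ref{prp:decomp}.
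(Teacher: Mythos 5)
Your proposal is correct, but it is organized quite differently from the paper's construction, essentially by transposing the recursion. The paper builds the matrix partition by partition (in lexicographic order of the positions $(k,m)$, anchored by a bijection $\omega\to\omega^n$ in row $0$) and, within each partition, class by class: each new class $a^{k,m}_i$ must meet every column-intersection $b(\ell)$ infinitely often and every cross-intersection $c(\sigma)$ exactly once, which forces the three interleaved sequences $x_\ell,y_\ell,z_\ell$, the avoidance sets $d(x)$, a reserve of elements for the later classes of the same partition, and a case split $k<n-1$ versus $k\ge n-1$ to see that the pools stay non-empty. You instead build the colour array of each point $x\in\omega$ in turn, driven by a bookkeeping list of positive requirements, and the entire negative (uniqueness) constraint collapses to the observation that at each free coordinate only finitely many values are forbidden: a dangerous $n$-pattern through the current position must have its other $n-1$ positions already committed in $x$ (finitely many choices), and only finitely many earlier points exist to clash with. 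This is a genuine simplification of the hardest part of the paper's argument, and your checks at the two delicate spots are the right ones: a column requirement fixes coordinates in a single column, so every admissible cross-pattern it touches still passes through a coordinate you control, and a cross-existence requirement is only served for an unrealized pattern, so its $n$ simultaneously fixed coordinates cannot complete a clash. The only points worth making explicit in a write-up are that the fresh points must exhaust $\omega$ (so that each $c_{k,m}$ is total and the $P_{k,m}$ really partition $\omega$), that each requirement of type (i) must be revisited infinitely often to get \emph{infinite} intersections, and that a pattern's clash-avoidance happens at the moment its \emph{last} position in the current point is committed, which is a free position whenever the pattern is not entirely requirement-fixed. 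With those details spelled out, your scheme yields the same strengthened form of $n$-optimality that the paper's construction does and that Proposition~\ref{prp:decomp} later exploits.
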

\begin{proof}
To start we fix a bijective enumeration
$h=(h_0,\ldots,h_{n-1}):\omega\to\omega^n$ and define $a_i^{0,m}$
to be the pre-image of $i$ under $h_m$.
Let $P_{0,m}:=\{a_i^{0,m}\mid i\in\omega\}$.

The rest of the construction consists of a three-fold recursion.
The outer loop is indexed with $(k,m)\in\omega\times n$,
and goes row by row, from the left to the right.
One could also say that the progression of the indices
follows the lexicographic order of $\omega\times n$, i.e.,
$m$ grows up to $n-1$ and then drops down to 0 while $k$ increases to $k+1$.
(The first $n$ stages of the outer loop, where $k=0$,
have been included in the recursive anchor in the first line of the proof.)

The inner recursion loops are common $\omega$-recursions.
In each stage of the middle one we define one element $a^{k,m}_i$
of the partition $P_{k,m}$,
and the innermost consists of a choice procedure for the elements
of that set $a^{k,m}_i$.

So assume that the partitions $P_{\ell,m}=\{a_i^{\ell,m}\mid i\in\omega\}$
have already been defined for $(\ell,m)<_\mathrm{lex}(k,n)$
and also the $i$ first sets
$a^{k,m}_0=a_0,\ldots,a^{k,m}_{i-1}=a_{i-1}$ of $P_{k,m}$
have been fixed.
Assume also,
that the family constructed so far
has the properties (i) and (ii) from Definition~\ref{defi:opt_mtx_prt}.
We inductively choose three sequences
$x_\ell,\,y_\ell$ and $z_\ell$ of members of $\omega\setminus\bigcup_{h<i}a_h$
and afterwards set $a_i:=\{x_\ell,y_\ell\mid\ell\in\omega\}$.
The members of the $x$ sequence will make the matrix satisfy
column-wise agreement (requirement (i))
while the $y_\ell$ guarantee that the intersections
for (ii) ($n$-optimality) are non-empty.
The elements $z_\ell$ go back to the stack
and build the pool for the construction of the further members of $P_{k,m}$.

We need the following objects $f,\,b,\,I,\,c,\,\tau$ and $d$
for book-keeping and assume that they
have been fixed at the start of the definition
of the members of the partition $P_{m,k}$, before the construction of $a_0$.

Let $f:\omega\to{^k \omega}$ be onto and $\aleph_0$-to-1
and set for $\ell\in\omega$
$$b(\ell):=\bigcap_{j<k} a^{j,m}_{f(\ell)(j)}.$$
These sets $b(\ell)$ have to be met by $a_i$ infinitely many times.
So we will choose $x_\ell$ from $b(\ell)$.
Let $I$ be the set of subsets $\sigma$ of
$(\omega\times k\times n)\cup(\omega\times(k+1)\times m)$ of
cardinality $n-1$ such that $\mathrm{pr}_{2,3}\uhr\sigma$ is injective and 
$\mathrm{pr}_3\uhr\sigma$ is not equal to to the constant function whose only
value is $m$,
where $\mathrm{pr}_3$ and $\mathrm{pr}_{2,3}$ are the projections to the third
and to the second and third component of a triple respectively.
So if $\sigma\in I$ then the elements of $\sigma$
are indices for $n-1$ members of pairwise distinct partitions and these 
partitions lie in at least two distinct columns of the $n$-optimal matrix.
For $\sigma\in I$ let
$$c(\sigma):=\bigcap_{(j,p,q)\in\sigma} a^{p,q}_j$$
and note that this set is infinite,
because if $P_{r,s}$ is a partition which is not involved in $\sigma$
then $c(\sigma)$ meets every element of $P_{r,s}$ in exactly one natural
number by $n$-optimality.

Also the sets $c(\sigma)$ have to be met by $a_i$.
So we fix a bijective enumeration $\tau$ of $I$ and
choose $y_\ell$ from $c(\tau(\ell))$
unless that set has already been hit by earlier members of $a_i$.

Condition (ii) imposes that each set $c(\sigma)$ be met in only one element.
So once the intersection of $a_i$ and $c(\sigma)$ is non-empty,
that particular set $c(\sigma)$ has to be avoided in later choices
of members of $a_i$. 
We thus define for every natural number $x$ the set
$$d(x):=\bigcup\left\{ c(\sigma)\mid \, \sigma\in I\text{ and }
x\in c(\sigma)
\right\}$$
and choose $x_\ell$ and $y_\ell$ from outside
$\bigcup_{j<\ell} d(x_j)\cup d(y_j)$.

We now turn to the formal definition of our three sequences
$x_\ell,\,y_\ell$ and $z_\ell$
and argue afterwards, why these choices are always possible.
Set $e=\bigcup_{h<i}a_h$ and let inductively
\begin{eqnarray*}
x_\ell&:=&
\min
 \left(
  b(\ell)
  \setminus
  \left(
   e\cup\bigcup_{j<\ell}(d(x_j)\cup d(y_j)\cup\{z_j\})
  \right)
 \right),\\
y_\ell&:=&
\begin{cases}
x_\ell,&
c(\tau(\ell))
\cap
\bigcup_{j<\ell}\{x_j,y_j,x_{j+1}\}
 \neq
\emptyset\\
\min 
 c(\tau(\ell))
 \setminus 
  (e\cup\{z_0,..,z_{\ell-1}\}),&
\text{otherwise,}
\end{cases}\\
z_\ell&:=&
\min
 \left(
  b(\ell)
  \setminus
  \left(
   e\cup\bigcup_{j<\ell}\{x_j,y_j,z_j,x_{j+1},y_{j+1}\}
  \right)
 \right).
\end{eqnarray*}
We finally show that this construction does not break down, i.e., that the 
sets, from which $x_\ell$ or $z_\ell$ are picked, are non-empty for all
$\ell$.  (A variation shows that the choice $y_\ell$ is always possible, as well.)
The argument splits depending on whether $k$ is less than or at least $n-1$.
So let $k<n-1$ and $\ell\in\omega$, set
$\rho=\rho(\ell)=\{(f(\ell)(j),j,m)\,:\,j<k\}$ and fix $\sigma\in I$ such that $\rho\subset\sigma$,
say $\sigma=\rho\cup\{(j_1,p_1,q_1),\ldots,(j_r,p_r,q_r)\}$ for $r=n-1-k$.
Now let
$$F:=F(\ell,\sigma):=
\left\{
\{(g_1,p_1,q_1),\ldots,(g_r,p_r,q_r)\}
\,:\,
(g_1,\ldots,g_r)\in\omega^r
\right\}$$
and note that $b(\ell)$ is the disjoint union of the family
$\{c(\rho\cup\tau)\,:\,\tau\in F\}$:
$$b(\ell)=\dot{\bigcup_{\tau\in F}}c(\rho\cup\tau).$$
In order to prove that we always can choose our new element of the $x$-, $y$-
or $z$-sequence, we first state two consequences of $n$-optimality:
\begin{enumerate}
  \item For every $\tau\in F$ and $h<i$ the intersection $c(\rho\cup\tau)\cap a_h$
    is a singleton.
  \item Given any $\sigma'\in I$, for every $\tau\in F$ with
    $\sigma'\ne\rho\cup\tau$ the intersection $c(\rho\cup\tau)\cap c(\sigma') $ has at
    most one element.
  \end{enumerate}
Now note that in order to define $x_\ell$ or $z_\ell$,
we subtract only finitely many sets of the form
$a_h$ or $c(\sigma')$ from $b(\ell)$ .
So in any case, infintely many natural numbers remain in
the pool.

Next let $k\ge n-1$. Then any set of the form $c(\sigma)$ meets $b(\ell)$ in
at most one member by a direct application of $n$-optimality.
Furthermore the $z$-part of the construction yields by induction on
$i$ that $b(\ell)\setminus e=b(\ell)\setminus(\bigcup_{h<i}a_h)$ is an
infinite set.



We leave the verification that this construction indeed yields an
$n$-optimal matrix of partitions for the reader.
\end{proof}

There is also a proof of Lemma \ref{lm:opt_mtx_prt}
which uses a more sophisticated
forcing style argument in the construction of the partition $P_{k,m}$,
but since it does not significantly decrease the length of the proof
we stuck to the elementary recursive construction given above.

\section{Strongly homogeneous and free Souslin trees}
\label{sec:str_hom_+_free}

\subsection{Preliminaries on Souslin trees}
\label{sec:prel_ST}

A \emph{tree} is a partial order $(T,<_T)$ where for all $t\in T$
the set of predecessors $\{s\mid s<_T t\}$
is well-ordered by $<_T$.
The elements of a tree are called \emph{nodes}.
For a node $t\in T$ we let $\NF(t)$ be the set of $t$'s immediate successors.
The \emph{height} of the node $t$ in $T$ is the order
type of the set of its predecessors under the ordering of $T$,
$\hgt_T(t):=\ot(\{s\mid s<_T t\},<_T)$.
For an ordinal $\alpha$ we let $T_\alpha$ denote
the set of nodes of $T$ with height $\alpha$.
If $\hgt_T(s)>\alpha$ we let $s\uhr\alpha$ be the unique
predecessor of $s$ in level $\alpha$.

The \emph{height of a tree} $T$, $\hgt T$,
is the minimal ordinal $\alpha$ such that $T_\alpha$ is empty.
An \emph{antichain} is a set of pairwise incomparable nodes of $T$,
so for $\alpha<\hgt T$,
the level $T_\alpha$ is an antichain of $T$.

Nodes, that do not have $<_T$-successors, are called \emph{leaves}, and
$T$ is called \emph{$\kappa$-splitting} or \emph{$\kappa$-branching},
$\kappa$ a cardinal, if all nodes of $T$
have exactly $\kappa$ immediate successors, except for the leaves.

A \emph{branch} is a subset $b$ of $T$ that is linearly ordered by $<_T$ and
closed downwards, i.e. if $s<_T t\in b$ then $s\in b$.
Under the notion of a \emph{normal} tree we subsume the following
four conditions:
\begin{enumerate}[a)]
\item there is a single minimal node called the \emph{root};
\item each node $s$ with $\hgt(s)+1<\hgt T$ has at least two immediate successors;
\item each node has successors in every higher non-empty level;
\item branches of limit length have unique limits (if they are extended in the tree),
i.e., if $s,t$ are nodes of $T$ of limit height
whose sets of predecessors coincide, then $s=t$.
\end{enumerate}
Note that by condition c) leaves can
only appear in the top level of a normal tree.

For a node $t\in T$ we denote by $T(t)$ the set $\{s\in T:t\leq_T s\}$
of nodes above (and including) $t$ which becomes a tree when equipped
with the ordering inherited from $T$.
A tree $T$ is said to be \emph{homogeneous},
if for all pairs $s,t\in T$ of the same height there is
a tree isomorphism (of partial orders) between $T(s)$ and $T(t)$,
the trees of nodes in $T$ above $s$ and $t$ respectively.
For many classes of trees, such as Souslin trees, this is equivalent to 
the condition that for each pair $s,t\in T$ of nodes of the same height there
is an automorphism of $T$ mapping $s$ to $t$.
A tree is \emph{rigid} if it does not admit any non-trivial automorphism.

We will consider two operations on the class of trees: sum and product.
Given trees $(T^i,<_i)$ for $i\in I$, the \emph{tree sum} of this family,
denoted by $\bigoplus_{i\in I}T^i$ is the disjoint union of the sets $T^i$
plus a common root $r\notin \bigcup T^i$.
The tree order $<$ on $\bigoplus T^i$ is
given by the (disjoint) union of the tree orders of summands as well as
the relation $r< t$ for all $t\in \bigcup T^i$.
The height of $\bigoplus T^i$ is given by the ordinal
$1 + \sup\{\hgt T^i: i\in I\}$.

Let now all trees $T^i$ be of height $\mu$.
The \emph{tree product} $\bigotimes_{i\in I}T^i$
over the family $(T^i)_{i\in I}$
is given by the union over the cartesian products of the levels
$T^i_\alpha$:
$$\bigotimes_{i\in I}T^i := \bigcup_{\alpha<\mu}\prod_{i\in I} T^i_\alpha.$$
The product tree order is simply the conjunction of the relations $<_i$.

In order to make a decomposition of a tree into a product feasible
we also introduce the notion of a nice tree equivalence relation.
Let $T$ be a normal and $\aleph_0$-splitting tree and $\equiv$
an equivalence relation on $T$.
Then we say that $\equiv$ is a \emph{nice tree equivalence relation (nice
  t.e.r.)} if $\equiv$ respects levels (i.e., it refines $T\otimes T$),
is compatible with the tree order (i.e., $\hgt(s)=\hgt(r) $ and
$s<t\equiv u > r$ imply $s\equiv r$), the quotient partial order
$T/\!\!\equiv$ of $\equiv$-classes ordered by the inherited partial order, i.e.
$$[s]<_\equiv [t]\quad\iff\quad s< t\,,$$
is a normal and $\aleph_0$-splitting tree and the relation is nice,
by which we mean that for all triples of nodes $s,r,t$ such that
$s\equiv r$ and $t$ is above $s$ there is a node $u\equiv t$, $u$ above
$r$.
Another way to formulate this last property ``niceness'' associates to each
branch $b$ through $T$ a subtree $T^{b}_\equiv:=\bigcup_{s\in
  b}s/\!\!\equiv$ of $T$ and requires that it satisfies point c) in our
definition of normal trees, i.e., every node $t\in T^{b}_\equiv$ has
successors in every higher level of $T^b_\equiv$. 

Now consider the case that a tree $T$ carries nice tree equivalence relations
$\equiv_i$ for $i<n$ such that for every level $\alpha$
and every $n$-sequence of equivalence classes
$c_i\in (T/\!\!\equiv_i)_\alpha$, $i<n$
the intersection $\bigcap c_i$ is a singleton $\{t\}$ with $t\in T_\alpha$.
Then we have a natural isomorphism
between the tree $T$ and the product of its quotient trees,
$\bigotimes_{i<n} T/\!\!\equiv_i$ given by
$t\mapsto (t/\!\!\equiv_i : i< n)$.

We finally come to Souslin trees.
In general,
a \emph{Souslin tree} is a tree $T$ of height $\omega_1$
such that
every family of pairwise incomparable nodes
and also every branch of $T$ is at most countable.
\emph{Unless stated otherwise,
we will only consider normal and $\aleph_0$-splitting Souslin trees.}
In this case the sole absence of uncountable
antichains -- referred to as \emph{the contable chain condition (c.c.c.)} --
already implies that the tree has no cofinal branch.
A main elementary feature of Souslin trees is that their square is
no longer Souslin:
$T\otimes T$ violates the c.c.c. for every Souslin tree $T$.

The naming \emph{Souslin} of these trees stems from their tight connection
to the famous question of
Mikhail Yakovlevich Souslin that was published as \emph{Probl\`{e}me 3)}
on page 223 of the first issue of Fundamenta Mathematicae in 1920
(cf. also \cite{devlin-johnsbraten} or \cite[II.4]{kunen};
the latter transliterates the name as \emph{Suslin}). 
It is well known that the existence of Souslin trees is independent of ZFC,
so whenever we assume that there is some Souslin tree, we make an extra
assumption beyond the realm of standard set theory.

\subsection{Strongly homogeneous and free trees}
\label{sec:str_hom}

We take a closer look at two classes of Souslin trees,
that are widely known among set theorists, although
often under different names, of which we try to state as many as possible.

Strongly homogeneous Souslin trees occur quite often
in set theoretic literature.
In \cite{larson-todorcevic} they are called
coherent Souslin trees and play a central
role in the solution of Katetov's Problem
on the metrizability of certain compact spaces.
Shelah and Zapletal show in \cite[Theorem 4.12]{shelah-zapletal} that
Todorcevic's term for a Souslin tree
in one Cohen real is strongly homogeneous,
Larson gives a direct $\diamondsuit$-construction (\cite[Lemma 1.2]{larson}),
and also Jensen's construc\-tion (under the same hypothesis)
of a 2-splitting, homogeneous tree,
as carried out in \cite[Chapter IV]{devlin-johnsbraten},
is easily seen to yield a strongly homogeneous tree.

\begin{defi}
A Souslin tree $T$ is called \emph{strongly homogeneous} if there is a family
$(\psi_{st}\mid s,t\in T,\,\hgt s=\hgt t)$ which has the following properties:
\begin{enumerate}[1)]
\item $\psi_{st}$ is an isomorphism between
the tree $T(s)$ of nodes above $s$ and the tree $T(t)$ of nodes
above $t$ and $\psi_{ss}$ is the identity.
\item (commutativity) For all nodes $r,s,t$ of the same level of $T$
we have $\psi_{rt}=\psi_{st}\circ\psi_{rs}$.
\item (coherence) For nodes $r,s$ from the same level,
$t$ above $r$ and $u=\psi_{rs}(t)$ we require that $\psi_{tu}$ is
the restriction of $\psi_{rs}$ to the tree $T(t)\subset T(r)$.
\item (transitivity) If $t$ and $u$ are nodes
on the same limit level $T_\alpha$, then there is a level $T_\gamma$
below such that for the corresponding predecessors $r$ of $t$ and $s$ of $u$
we have $\psi_{rs}(t)=u$.
\end{enumerate}
\end{defi}

Some authors call such a family of tree isomorphisms associated to a strongly
homogeneous tree a \emph{coherent family}.

Given any homogeneous tree,
it is easy to define a family on $T$ with the properties 1-3) above.
The crucial property of a coherent family is that of \emph{transitivity},
which means that every limit level is a minimal extension of the initial
segment below with respect to the coherent family on that initial segment. 
Also the automorphism group of a strongly homogeneous Souslin tree $T$
is in a sense minimal, as shown in the following proposition.

\begin{prp}
\label{prp:str_hom_aut}
Every automorphism $\varphi$ of a strongly homogeneous Souslin tree $T$ is
eventually equal to the union of a subset of the coherent family
$(\psi_{st})$, i.e.,
there is a countable ordinal $\alpha$ such that for all
nodes $t$ of height greater than $\alpha$ we have
$$\varphi(t) =
\psi_{t\upharpoonright\alpha,\varphi(t\upharpoonright\alpha)}(t).$$
\end{prp}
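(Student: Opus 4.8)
The plan is to recast the conclusion as a statement about \emph{where} $\varphi$ fails to be governed by the coherent family, and then to forbid such failures at high levels by a genericity argument at a limit of the form $M\cap\omega_1$. For a node $s$ I write $\Phi_s:=\psi_{s,\varphi(s)}^{-1}\circ(\varphi\uhr T(s))$, an automorphism of $T(s)$ fixing $s$, and call $s$ \emph{faithful} when $\Phi_s=\id$; then the conclusion for a given $\alpha$ is exactly the assertion that every node of level $\alpha$ is faithful, since $\Phi_{s}=\id$ unravels to $\varphi(t)=\psi_{s,\varphi(s)}(t)$ for all $t\in T(s)$. First I would record two soft facts. Using coherence (property~3) together with the unique-limits clause of normality, faithfulness is upward closed: if $s$ is faithful and $t>s$ then $t$ is faithful, because agreement of $\varphi$ with $\psi_{s,\varphi(s)}$ at $t$ lets coherence identify $\psi_{t,\varphi(t)}$ with the restriction of $\psi_{s,\varphi(s)}$ to $T(t)$. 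Consequently the minimal faithful nodes form an antichain, which is \emph{countable} because $T$ is Souslin; let $\beta^\ast<\omega_1$ bound their heights.

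Next I would set up the reduction. Assume toward a contradiction that no level is entirely faithful, so the (downward closed) set of non-faithful nodes meets every level. Since every faithful node lies above one of the countably many minimal faithful nodes, any non-faithful node $t_1$ of height $>\beta^\ast$ has the property that the whole cone $T(t_1)$ consists of non-faithful nodes. Inside $T(t_1)$ this forces the \emph{deviation nodes} --- those $p$ whose immediate predecessor $p^-$ satisfies $\varphi(p)\neq\psi_{p^-,\varphi(p^-)}(p)$, equivalently $\Phi_{p^-}(p)\neq p$ --- to be dense: a node with no deviation node above it would, by coherence at successor steps and unique limits at limit steps, propagate $\Phi=\id$ upward and hence be faithful.

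Now comes the core. I would take a countable $M\prec H_{\omega_2}$ with $T,\varphi,(\psi_{st}),t_1\in M$, put $\alpha:=M\cap\omega_1$ (a limit ordinal), and choose any $s\in T(t_1)$ of height $\alpha$. The standard genericity of Souslin trees --- maximal antichains of $T(t_1)$ lying in $M$ are countable, hence contained in $M$, hence met below level $\alpha$ --- shows that the branch $\{s\uhr\xi:\xi<\alpha\}$ meets the dense set of deviation nodes cofinally in $\alpha$; so there are deviation nodes \emph{on the branch to} $s$ of height approaching $\alpha$. On the other hand $s$ and $\varphi(s)$ lie on the same limit level $T_\alpha$, so transitivity (property~4) supplies $\gamma<\alpha$ with $\psi_{s\uhr\gamma,\varphi(s\uhr\gamma)}(s)=\varphi(s)$, i.e.\ $\Phi_{s\uhr\gamma}(s)=s$. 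Here I would invoke the one genuine lemma: if $\Phi_{s'}(s)=s$ for some $s'<s$ then, since $\Phi_{s'}$ preserves order, every $p$ with $s'\le p<s$ satisfies $\Phi_{s'}(p)<\Phi_{s'}(s)=s$ and therefore $\Phi_{s'}(p)=p$; coherence then upgrades this to ``no deviation node lies in the interval $(s',s]$''. This directly contradicts the cofinally many deviation nodes found on the branch to $s$ above level $\gamma$, and the proof is complete.

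The step I expect to be delicate, and where transitivity is indispensable, is precisely this clash at the limit level: deviations can ``heal'' at limits, since a branch may pass through branching nodes while staying on non-deviating successors, so no naive antichain of deviations yields a contradiction on its own. The genericity of the branch through a node of height $M\cap\omega_1$ is exactly what forces that branch to carry cofinally many deviations, while order-preservation of $\Phi_{s'}$ together with the transitivity witness $\gamma$ is exactly what forbids them; matching these two facts at the same $\alpha$ is the crux. Beyond the definitions, the only external inputs I assume are that $T$ has no uncountable antichains and the routine genericity of Souslin trees at levels of the form $M\cap\omega_1$.
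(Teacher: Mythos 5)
Your proof is correct. It shares with the paper's argument the essential skeleton --- reduce to density of ``faithful'' nodes, pass to a cone in which every node is unfaithful, manufacture a limit level $\alpha$ whose branches carry disagreements cofinally, and clash this with transitivity of the coherent family --- but the implementation is genuinely different at two points. First, where you take $\alpha=M\cap\omega_1$ for a countable $M\prec H_{\omega_2}$ and use genericity of the branch below a node of that height to meet the dense set of deviation nodes cofinally, the paper works bare-handed: it recursively chooses $\alpha_{n+1}$ so large that \emph{every} node $t$ of level $\alpha_{n+1}$ disagrees with $\psi_{t\uhr\alpha_n,\varphi(t\uhr\alpha_n)}$ (this uses only the countability of levels together with the fact that the disagreement set above a fixed $s$ is upward closed and dense, hence contains a full level of $T(s)$ --- a point the paper leaves implicit), and sets $\alpha=\sup_n\alpha_n$. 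Second, because the paper records disagreement relative to a fixed lower level rather than at the immediate-predecessor step, its final appeal to transitivity yields a contradiction immediately and needs no analogue of your order-preservation lemma for $\Phi_{s'}$; your finer notion of ``deviation node'' is exactly what makes that extra (correct) step necessary, since, as you note, deviations defined stepwise could in principle ``heal'' along a branch. The trade-off: your route imports $H_{\omega_2}$ and elementary submodels but makes the density/genericity mechanism conceptually transparent, while the paper's stays entirely elementary at the cost of a slightly more delicate recursion.
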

This implies that the automorphism group of such a tree has exactly
$2^{\aleph_0}$ elements.
\begin{proof}
It suffices to show that above every node $r\in T$ there is a node
$s$, such that all $t>s$ are mapped by the automorphism
$\varphi$ according to the rule
stated above with $\alpha=\hgt(s)$.

To reach a statement contradicting the transitivity
of the family $(\psi_{st})$,
we assume that there is a node $r\in T$,
such that for each successor $s$ of $r$ there is
a node $t\geq s$, such that $\varphi(t)\neq\psi_{s\varphi(s)}(t)$.
We can inductively choose an increasing sequence of ordinals
$\alpha_n$ such that for all nodes $t\in T_{\alpha_{n+1}}$ we have
$$\varphi(t)\neq
\psi_{t\upharpoonright\alpha_n \varphi(t\upharpoonright\alpha_n)}(t).$$
Let $\alpha$ be the supremum of the $\alpha_n$ and pick any node
$t\in T_\alpha$.
Since $\alpha$ is a limit ordinal and by transitivity of the coherent family
we find an $n\in\omega$ such that
$\varphi(t)=\psi_{t\upharpoonright\alpha_n \varphi(t\upharpoonright\alpha_n)}(t)$
which is of course impossible by the choice of the $\alpha_n$.
\end{proof}

Now we come to \emph{free} trees. Also this property has several different
names, e.g. full
(Jensen, Todor\c{c}evi\'{c}, \cite{GKH,todorcevic_trees_and_orders}) or
'Souslin and all derived trees Souslin'
(Abraham and Shelah, \cite{abraham-shelah_aronszajn_trees,abraham-shelah}).
In the context of \cite{degrigST}
(cf. Section \ref{sec:free} of the present article)
free trees could also be called
'$<\!\!\omega$-fold Souslin off the generic branch'.

\begin{defi}
A normal tree $T$ of height $\omega_1$ is \emph{free}
if for every finite (and non-empty) set of nodes
$s_0,\ldots,s_n$ of $T$ of the same height,
the tree product $\bigotimes_{i=0}^n T(s_i)$ satisfies the c.c.c.
\end{defi}

Free trees are easily seen to be rigid Souslin trees
as the product of two isomorphic relative trees $T(s)$ and $T(t)$
would clearly not be Souslin.
In Section \ref{sec:separating} we will also consider weaker,
parametrized forms of freeness.

\subsection{Decompositions of strongly homogeneous Souslin trees}
\label{sec:dec_free}

We now come to the key result of this paper.
The following theorem is stated in \cite[p.246]{shelah-zapletal}
in the case $n=2$ without proof.
Larson gives the construction of a single free subalgebra
of a strongly homogeneous Souslin algebra in terms of trees
in the proof of Theorem 8.5 in his paper \cite{larson}.
Some ideas in the following proof are borrowed from that construction.

\begin{thm}\label{thm:str_hom=free_x_free}
For every natural number $n>1$ and every
$\aleph_0$-branching, strongly homogeneous Souslin tree $T$
there are free Souslin trees $S_0,\ldots,S_{n-1}$
such that $T\cong \bigotimes_{m<n} S_m$.
\end{thm}

\begin{proof}
Let $T$ be a strongly homogeneous Souslin tree and
denote by $\psi_{s,t}$ the members of the coherent family of $T$.
We inductively (level by level) define $n$ nice t.e.r.s
$\equiv_0,\ldots,\equiv_{n-1}$ with the following properties:
\begin{itemize}
\item $T/\!\!\equiv_m$ is a free Souslin tree for $m<n$.
\item For any sequence $(s_0,\ldots,s_{n-1})\in T_\alpha$
the intersection of the classes $s_m/\!\!\equiv_m$ for $m<n$ is
a singleton: $\bigcap_{m<n} (s_m/\!\!\equiv_m)=\{r\}$ for some $r\in T_\alpha$.
\end{itemize}
The second claim entails the existence of the isomorphism between
$T$ and $\bigotimes_{m<n} T/\!\!\equiv_m$.

Let $(P_{k,m}\,:\, m<n,k\in\omega)$ be an $n$-optimal matrix of partitions,
where we view each $P_{k,m}$ as enumerated by $a_i^{k,m}$, $i\in\omega$.
In order to define t.e.r.s we transfer the whole
matrix of the $P_{k,m}$ to every set $\NF(s)$ for $s\in T$ in a coherent way:
Choose for every $\alpha<\omega_1$
an anchor node $r_\alpha\in T_\alpha$
and a bijection $\sigma_\alpha:\omega\to\NF(r_\alpha)$,
and define for $s\in T_\alpha$ and all indices $i,k,m$ the sets
$$a_i^{k,m}(s):=(\psi_{r_\alpha,s}\circ\sigma_\alpha)''a_i^{k,m}.$$
Then clearly for every $s\in T$, $k\in\omega$ and $m<n$,
the set $P_{k,m}(s):=\{a_i^{k,m}(s)\mid i\in\omega\}$ forms
a partition of $\NF(s)$, and these partitions are linked to each other
by the coherent family in a coherent way, i.e., 
$\psi_{s,t}$ transfers $P_{k,m}(s)$ to $P_{k,m}(t)$.

Fix $m<n$ in order to define $\equiv_m$ on $T$ by recursion on the height.
We will also enumerate the $\equiv_m$-classes of each level in
order type $\omega$, i.e.,
we will fix an onto mapping $h:T\to\omega$,
such that for $s,t\in T_\alpha$ we have
$s\equiv_m t$ if and only if $h(s)=h(t)$.

Choose $P_{0,m}(\text{root})$ as the partition
of the set $T_1=\NF(\text{root})$ and let $\equiv_m$ on level $T_1$
be the equivalence relation with classes
$a_i^{0,m}(\text{root})$ for $i\in\omega$.
Let $h(\text{root})=0$ and choose $h$ on $T_1$ in a way,
such that nodes $s$ and $t$ are $\equiv_m$-equivalent
just in case that their $h$-values coincide.

Next we consider the case where $\alpha$ is a successor ordinal,
$\alpha=\gamma+1$ for some $\gamma<\omega_1$.
Let $s,t\in T_\alpha$ and let $s^-<_T s$ and $t^-<_T t$ be
their direct predecessors on level $\gamma$.
We let  $s\equiv_m t$ if and only if their direct predecessors
are $\equiv_m$-equivalent, $s^-\equiv_m t^-$
(so in particular $h(s^-,m)=h(t^-,m)$),
and if there is $i\in\omega$ such that
$$s\in a_i^{h(s^-),m}(s^-)
\text{ and }t\in a_i^{h(t^-),m}(t^-).$$
In words, the $\equiv_m$-equivalence of the direct predecessors
gives us a natural number $h(s^-)$ and we apply
$P_{h(s^-),m}$ on level $\alpha$ to decide
whether or not $s$ and $t$ are $\equiv_m$-equivalent. 
Extend $h$ to level $T_\alpha$ as described above.

On limit stages $\lambda$ the relation $\equiv_m$ is already determined
by its behaviour below,
and we choose the $h\uhr T_\lambda$ once more in any way such that
$h(s)=h(t)$ is equivalent to
$\equiv_m$-equivalence for nodes $s,t\in T_\lambda$.

Having finished the construction of the relation $\equiv_m$,
we show that it produces a nice t.e.r,
where all properties but niceness follow rather easily from the construction.
So we only deduce niceness.
Letting $s\equiv_m r$ on level $\alpha$ and $t$ above $s$ we claim 
that $\psi_{s,r}(t)\equiv_m t$ and show this
by induction on the height of $t$ above $s$.
For successor stages the claim follows directly
from the construction and the inductive hypothesis,
since the relevant partition $P_{j,m}$
is transferred via $\psi_{s,r}$ by the coherence of the coherent family.
The limit case follows directly from the inductive assumption.
(This property of $\equiv_m$,
that $\equiv_m$-equivalence lifts from $s$ and $r$
to preimages and images under
$\psi_{s,r}$ will be used again in the proof of the Claim below.)

It remains to prove the two properties stated before the construction.
We start with the freeness of $T/\!\!\equiv_m$.
Let $s_0,\ldots,s_{k-1}$ be pairwise non-$m$-equivalent nodes
of the same height $\alpha$ for some natural number $k$.
We write $S_i$ for $(T/\!\!\equiv_m)(s_i/\!\!\equiv_m)$ and try to find
for every antichain $A$ of $\bigotimes_{i<k}S_i$
an antichain $B$ of $T$ of the same cardinality.
This would prove that $T/\!\!\equiv_m$ is a free tree.
We get a hint about where to look for the members of such an antichain
$B$ from the following\\
{\bf Claim.} 
Fix $m<n$ and pairwise non-$m$-equivalent nodes
$s_0,\ldots,s_{k-1}\in T_\alpha$.
For any sequence $(t_0,\ldots,t_{k-1})$ of nodes in $T_\beta$,
with $\alpha<\beta$ and $s_i<t_i$ for $i<k$,
the intersection of the classes
$t_i/\!\!\equiv_m\cap\, T(s_i)$ above the nodes $s_i$,
shifted above $s_0$ by $\psi_{s_i,s_0}$, i.e., the set
$$\bigcap_{i<k} {\psi_{s_i,s_0}}''(t_i/\!\!\equiv_m)$$
is infinite and therefore non-empty.
\begin{proof}[Proof of the Claim.]
By induction on the height $\beta$ of the nodes $t_i$,
starting with $\beta=\alpha+1$.
In this minimal case we have $t_i^-=s_i$.
So the sets ${\psi_{s_i,s_0}}"(t_i/\!\!\equiv_m)$
belong to distinct partitions $P_{h(s_i),m}(s_0)$, $i<k$
and therefore have an infinite intersection
by property (i) of the $n$-optimal matrix.

For the higher successor case $\beta=\gamma+1,\,\alpha<\gamma$,
we simulate this initial situation.
By the inductive hypothesis pick a node
$$r_0\in\bigcup_{i<k}{\psi_{s_i,s_0}}''(t^-_i/\!\!\equiv_m)>s_0,$$
and let $r_i:=\psi_{s_0,s_i}(r_0)>s_i$ for $i<k$.
We then know that $r_i\equiv_m t_i^-$,
so $t_i/\!\!\equiv_m$ has elements above $r_i$.
As a consequence $\bigcup_{i<k}{\psi_{r_i,r_0}}''(t_i/\!\!\equiv_m)$
is infinite by the same argument as above and furthermore
a subset of $\bigcup_{i<k}{\psi_{s_i,s_0}}''(t_i/\!\!\equiv_m)$.

For the case where $\beta$ is a limit ordinal
we choose $\gamma<\beta$ large enough,
such that letting $q_i=t_i\uhr\gamma$ for all $i,j<k$ we have
$\psi_{q_i,q_j}(t_i)=t_j$.
This is possible due to the transitivity of the coherent family.
We also require $\alpha<\gamma$.
The inductive hypothesis gives us a node
$$r_0\in\bigcup_{i<k}{\psi_{s_i,s_0}}''(q_i/\!\!\equiv_m)\subset T_\gamma,$$
which  we copy to $r_i:=\psi_{s_0,s_i}(r_0)$.
By this choice, we also have $r_i\equiv_m q_i$.
We consider $u=\psi_{q_i,r_0}(t_i)$.
By the commutativity of the coherent family this definition of $u$
is independent from the choice of $i<k$.
But then
$$\psi_{s_0,s_i}(u)=\psi_{r_0,r_i}(u)=
\psi_{r_0,r_i}\circ\psi_{q_i,r_0}(t_i)=\psi_{q_i,r_i}(t_i)$$
where the first equation follows from coherence,
the second from the definition of $u$ and the third one from commutativity.
So the property stated above right after the construction of $\equiv_m$
implies that $\psi_{s_i,s_0}(t_i)\equiv_m u$
since $r_i\equiv_m t_i$ for all $i<k$.
This completes the proof of the Claim.
\end{proof}
By virtue of the Claim we can pick for every tuple
$(t_0/\!\!\equiv_m,\ldots,t_{k-1}/\!\!\equiv_m)$
of our antichain $A\subset \bigotimes_{i<k} S_i$
a node $u\in\bigcap_{i<k}{\psi_{s_i,s_0}}''(t_i/\!\!\equiv_m)$ and
collect all these nodes in a set $B$.
Then $B$ is clearly an antichain of $T$ with the same cardinality as $A$.
So we have shown that $T/\!\!\equiv_m$ is indeed a free tree.

Now for the second property.
Let $(s_0,\ldots,s_{n-1})$ be any sequence of nodes of some $T_\alpha$.
We need to show that $\bigcap_{m<n}s_m/\!\!\equiv_m$ has a unique element.
This is done by induction on $\alpha>0$.
Starting with $\alpha=1$ we know that
$(s_m/\!\!\equiv_m)=a_{i_m}^{k_m,m}(\mathrm{root})$
for some $i_m$ and $k_m$.
So property (ii) of our $n$-optimal matrix is all we need here.
For $\alpha=\gamma+1$ we assume that the classes $s_m^-/\!\!\equiv_m$
meet in a single node, say $r\in T_\gamma$.
The set of elements of $s_m/\!\!\equiv_m$ which lie above $r$
is then just $a_{i_m}^{h_m(r),m}(r)$ 
and again property (ii) of the matrix proves the claim.
In the limit case we once more use the transitivity of the coherent family.
So let $\alpha$ be a limit and $\gamma<\alpha$ large enough such that
$\psi_{q_m,q_\ell}(s_m)=s_\ell$ where we abbreviate $s_m\uhr\gamma=q_m$.
For a last time in this proof we use the commutativity of the
coherent family:
Let $r$ be the unique element of the intersection
of the classes $q_m/\!\!\equiv_m$.
Then $t=\psi_{q_m,r}(s_m)$ is well defined and
independent from the choice of $m<n$.
By the lifting property of the equivalence relations stated above,
it follows from $q_m\equiv_m r$, that $s_m\equiv_m t$.
\end{proof}

We now state an algebraic feature which distinguishes our
method of decomposition as just carried out from other decompositions,
namely that partial products of our decomposition are always \emph{rigid},
cf. Remark \ref{rem:decomp}.

\begin{prp}
\label{prp:decomp}
Let $T$ be a strongly homogeneous Souslin tree and assume that
it has been decomposed into a product of $n$ free trees $S_0,\ldots, S_{n-1}$
by the procedure presented in the last proof.
Then the product of less than $n$ pairwise distinct trees from the 
sequence $S_0,\ldots,S_{n-1}$ is a rigid Souslin tree.
\end{prp}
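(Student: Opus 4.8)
Write $M\subsetneq n$ for the index set of the chosen factors, $P:=\bigotimes_{m\in M}S_m$, and $Q:=\bigotimes_{m\in n\setminus M}S_m$, so that under the isomorphism of Theorem~\ref{thm:str_hom=free_x_free} we may identify $T\cong P\otimes Q$ with $n\setminus M\neq\emptyset$. First I would check that $P$ is a Souslin tree: it has height $\omega_1$, and the projection $\pi\colon T\to P$, $t\mapsto(t/\!\!\equiv_m:m\in M)$, is level- and order-preserving and onto (complete any $M$-indexed tuple of classes to an $n$-tuple and apply the second property established in the proof of Theorem~\ref{thm:str_hom=free_x_free}); lifting any antichain of $P$ to a set of $\pi$-preimages, one per node, gives an antichain of $T$ of the same size, so $P$ satisfies the c.c.c. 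Assuming toward a contradiction a nontrivial automorphism $\varphi$ of $P$, the usual argument produces nodes $p\neq q$ of a common level $\alpha$ with $\theta:=\varphi\uhr P(p)$ an isomorphism $P(p)\to P(q)=P(\varphi(p))$, and since $p\neq q$ they differ in some coordinate $m^*\in M$, i.e.\ $p_{m^*}\not\equiv_{m^*}q_{m^*}$.

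\textbf{Lifting the isomorphism and extracting a coherent representative.}
Fix $z\in Q_\alpha$ and set $\hat p=(p,z)$, $\hat q=(q,z)\in T_\alpha$; then $\Theta:=\theta\otimes\id_{Q(z)}$ is an isomorphism $T(\hat p)\to T(\hat q)$. Composing with the coherent map yields an automorphism $\Phi:=\psi_{\hat q,\hat p}\circ\Theta$ of the (again strongly homogeneous) tree $T(\hat p)$, so Proposition~\ref{prp:str_hom_aut} applies to $\Phi$, and—using commutativity and coherence to recombine $\psi_{\hat p,\hat q}$ with the eventual representative of $\Phi$—also $\Theta$ is eventually equal to a member of the coherent family: there are $r\geq\hat p$ of some height $>\alpha$ and $r'=\Theta(r)$ with $\Theta\uhr T(r)=\psi_{r,r'}$. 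Writing $r=(r_P,r_Q)$, the fact that $\Theta$ fixes the $Q$-coordinate gives $r\equiv_m r'$ for every $m\in n\setminus M$, whence the lifting property of the relations $\equiv_m$ (established right after the construction of $\equiv_m$ in the proof of Theorem~\ref{thm:str_hom=free_x_free}) forces $\psi_{r,r'}$ to act as the identity on each untouched column; on the other hand $r\not\equiv_{m^*}r'$. Thus $\psi_{r,r'}$ is pushed into the product shape $\bar\psi\otimes\id_{Q(r_Q)}$ on $T(r)=P(r_P)\otimes Q(r_Q)$, whose $M$-part $\bar\psi$ does not depend on the $Q$-coordinate of its argument.

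\textbf{The contradiction, and the main obstacle.}
The contradiction comes from confronting this product shape with the way $\psi_{r,r'}$ transports the matrix $P_{k,m}(\cdot)$ index by index (coherence of the coherent family). On the immediate successors $\NF(r)$ the relation $\equiv_{m^*}$ is read off from the partition $P_{h_{m^*}(r),m^*}(r)$, while on $\NF(r')$ it is read off from $P_{h_{m^*}(r'),m^*}(r')$; because $r\not\equiv_{m^*}r'$ these are \emph{distinct} partitions of the same column, and column-wise agreement (property (i) of the matrix) makes each transported block $a_i^{h_{m^*}(r),m^*}(r')$ meet infinitely many classes of $P_{h_{m^*}(r'),m^*}(r')$. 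Hence, holding the full $M$-coordinate of a successor $t\geq r$ fixed while varying only its $Q$-coordinate—possible precisely because $n\setminus M\neq\emptyset$—drives the image $\psi_{r,r'}(t)$ through infinitely many distinct $m^*$-classes, so the $M$-coordinate of the output genuinely depends on the $Q$-coordinate of the input, contradicting the product shape. I expect the main obstacle to be exactly this last step: showing that the eventual coherent representative must split as $\bar\psi\otimes\id_{Q}$ and then that no single level of it can respect the $M$-coordinate. This is where the hypothesis $|M|<n$ (a non-empty block $Q$ of untouched columns) and the optimality of the constructed matrix—in the strengthened form delivered by Lemma~\ref{lm:opt_mtx_prt}, which is what keeps the relevant sub-intersections large even when $|M|=n-1$—are indispensable; with no spare column the very same computation instead reproduces the coherent family of $T$ and yields no contradiction, in agreement with $T$ itself being far from rigid.
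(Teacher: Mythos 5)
Your proposal is correct and follows essentially the same route as the paper's proof: lift the putative automorphism of the partial product to an automorphism $\varphi'\otimes\id$ of (a relative subtree of) the strongly homogeneous tree $T$, invoke Proposition~\ref{prp:str_hom_aut} to replace it eventually by a member $\psi_{r,r'}$ of the coherent family, and contradict the resulting product shape on the immediate successors of $r$ using that $\equiv_{m^*}$ is governed by \emph{distinct} partitions of column $m^*$ at $r$ and at $r'$ together with the strengthened $n$-optimality of the matrix. The only cosmetic differences are that the paper first reduces to the case of exactly $n-1$ factors and applies Proposition~\ref{prp:str_hom_aut} to the globally lifted automorphism of $T$ rather than to a locally shifted one.
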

\begin{proof}
It is clear that the product tree is Souslin and that it
is sufficient to show rigidity only for the case of $n-1$ factors, where $n>2$.
So assume that $R:=\bigotimes_{i<n-1}S_i$ admits the automorphism
$\varphi'\ne \id$ and derive a contradiction as follows.

Identifying $T$ and $R\otimes S_{n-1}$ we can lift $\varphi'$
to an automorphism $\varphi = \varphi'\otimes \id$ of $T$.
By Proposition \ref{prp:str_hom_aut} there is a countable ordinal $\alpha$
such that above level $T_\alpha$ the mapping $\varphi$ is given by
a subfamily of the coherent family of $T$.
As $\varphi' \ne \id$ there must be a node
$\bar{s}=(s/\!\!\equiv_0,\ldots,s/\!\!\equiv_{n-1})\in R$ such that 
$\varphi'(\bar{s})\ne \bar{s}$.
We certainly can assume that $\hgt_R(\bar{s})=\alpha$.

Pick an $\ell<n-1$ such that the $\ell$th component $\bar{s}_\ell$
of $\bar{s}$ is not mapped to itself under $\varphi'$.
We fix a representative $s\in \bar{s}_\ell=s/\!\!\equiv_\ell\subset T_\alpha$
and let $q:=\varphi(s)$ in order to get
$\varphi(r)=\psi_{sq}(r)$ for all (immediate) successors of $s$.
This is where we will find a contradiction.

In the construction above we have associated to class $s/\!\!\equiv_\ell$
a natural number $h(s,\ell)$ which defined the index of the partition 
that was used to extend $\equiv_\ell$ to the successors of the members of
the class $s/\!\!\equiv_\ell$.
The same holds for $q=\varphi(s)$, but as $q\not\equiv_\ell s$
we have $h(q,\ell)\ne h(s,\ell)$.
This implies that $\equiv_\ell$ for immediate succesors of $s$ and of $q$
is formed by virtue of different partitions of column $\ell$ of our
partition matrix.
Fix any immediate successor $r$ of $s$.
Since $\varphi'$ is assumed to be a well-defined mapping
$\varphi$ should map the intersection $\bigcap_{i<n-1} r/\!\!\equiv_i$
onto the set $\bigcap_{i<n-1} \varphi(r)/\!\!\equiv_i$. 
In particular, the equation
$${\psi_{sq}}''\left(T(s)\cap\bigcap_{i<n-1} r/\!\!\equiv_i\right)
= \left( T(q) \cap \bigcap_{i<n-1} \psi_{sq}(r)/\!\!\equiv_i\right)$$
should be true.
But this is not the case, because, by $n$-optimality of the matrix
of partitions, the left-hand-side of the above equation intersects
every $\equiv_\ell$-class of the immediate successors of $q$
in exactly one element, while the right-hand-side is a subset
of such an $\equiv_\ell$-class and at the same time an infinite set.
\end{proof}

In the next section we will find corollaries of
Theorem \ref{thm:str_hom=free_x_free} in a similar vein.
I especially mention Lemma \ref{lm:free_to_str_hom} which states that
forcing with such a partial product (as above) turns the complementary
partial product, which was rigid in the ground model,
into a strongly homogeneous Souslin tree in the generic extension.

The following complements Theorem \ref{thm:str_hom=free_x_free}.

\begin{thm}\label{thm:str_hom=str_hom_x_str_hom}
Every $\aleph_0$-branching, strongly homogeneous Souslin tree $T$
is (iso\-morphic to) the tree product
of $n$ strongly homogeneous Souslin trees for any given natural number $n>0$.
\end{thm}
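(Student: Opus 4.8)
The plan is to mimic the construction in the proof of Theorem~\ref{thm:str_hom=free_x_free}, but to replace the $n$-optimal matrix of partitions by a single, uniform coordinate partition scheme, so that the coherent family of $T$ survives in each quotient. The case $n=1$ being trivial, fix $n>1$ and a bijection $h=(h_0,\dots,h_{n-1})\colon\omega\to\omega^n$. As in the earlier proof, choose for every $\alpha<\omega_1$ an anchor $r_\alpha\in T_\alpha$ together with a bijection $\sigma_\alpha\colon\omega\to\NF(r_\alpha)$, and transfer the fibres of $h_m$ coherently to each successor set: for $s\in T_\alpha$ let $Q_m(s)$ be the partition of $\NF(s)$ whose classes are $(\psi_{r_\alpha,s}\circ\sigma_\alpha)''\{x:h_m(x)=i\}$, $i\in\omega$. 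By commutativity of the coherent family, $\psi_{s,t}$ always carries $Q_m(s)$ to $Q_m(t)$, exactly as in Theorem~\ref{thm:str_hom=free_x_free}. The decisive difference is that at \emph{every} node one uses the \emph{same} partition $Q_m$, rather than a partition whose index depends on the node.

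Define $\equiv_m$ on $T$ by recursion on the level just as before: $s\equiv_m t$ iff they share a level, their predecessors are $\equiv_m$-equivalent, and at the successor step they lie in the $Q_m(s^-)$- and $Q_m(t^-)$-classes of the same index; at limits $\equiv_m$ is determined by its behaviour below. Since the classes of $Q_m$ are infinite, $T/\!\!\equiv_m$ is $\aleph_0$-splitting; normality and niceness follow as in the earlier construction, and $T/\!\!\equiv_m$ is Souslin because any antichain of classes lifts to an antichain of representatives of the same size in $T$. First I would re-establish the lifting property: if $s\equiv_m r$ and $t>s$ then $\psi_{s,r}(t)\equiv_m t$. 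The induction is the one from Theorem~\ref{thm:str_hom=free_x_free}, only smoother, since the uniformity of $Q_m$ removes the dependence on any index $h(s^-)$, and coherence (property 3 of the coherent family) guarantees that $\psi_{s,r}$ sends the relevant $Q_m$-class to the corresponding one.

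The intersection property needed for the product isomorphism is then proved by induction on $\alpha$ exactly as in the earlier proof. For $\alpha=1$ the classes $s_m/\!\!\equiv_m$ are the fibres $\{x:h_m(x)=i_m\}$, whose intersection over $m<n$ is the single point $h^{-1}(i_0,\dots,i_{n-1})$; at a successor $\gamma+1$ the predecessor classes meet in a unique $r$, the part of $s_m/\!\!\equiv_m$ above $r$ is a single $Q_m(r)$-class, and again exactly one successor of $r$ carries all $n$ prescribed coordinates; the limit case uses transitivity of the coherent family verbatim. This yields $T\cong\bigotimes_{m<n}T/\!\!\equiv_m$.

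The heart of the matter, and the step that departs from Theorem~\ref{thm:str_hom=free_x_free}, is showing that each factor $T/\!\!\equiv_m$ is \emph{strongly homogeneous}. Because $\psi_{s,t}$ maps $Q_m(s)$ to $Q_m(t)$, it respects $\equiv_m$, and by the lifting property $\psi_{r,r'}$ acts as the identity on the quotient above $\bar r$ whenever $r\equiv_m r'$. Hence each $\psi_{s,t}$ descends to a well-defined isomorphism $\bar\psi_{\bar s,\bar t}\colon(T/\!\!\equiv_m)(\bar s)\to(T/\!\!\equiv_m)(\bar t)$ depending only on the classes $\bar s=s/\!\!\equiv_m$ and $\bar t=t/\!\!\equiv_m$. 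I expect the main (though routine) obstacle to be checking that the family $(\bar\psi_{\bar s,\bar t})$ inherits the four defining properties of a coherent family; commutativity, coherence and $\bar\psi_{\bar s,\bar s}=\id$ pass to the quotient at once, while transitivity is the delicate point: given $\bar t,\bar u$ on a limit level one lifts them to representatives $t,u$, applies transitivity of $(\psi_{st})$ in $T$ to find $\gamma$ with $\psi_{t\uhr\gamma,u\uhr\gamma}(t)=u$, and projects this equation down, using well-definedness of $\bar\psi$, to obtain $\bar\psi_{\bar t\uhr\gamma,\bar u\uhr\gamma}(\bar t)=\bar u$. This makes each $T/\!\!\equiv_m$ strongly homogeneous and completes the decomposition.
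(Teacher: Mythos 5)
Your proposal is correct and follows essentially the same route as the paper, which proves this theorem precisely as ``a simpler variant of the construction in the proof of Theorem~\ref{thm:str_hom=free_x_free} where we use only the first row of the matrix of partitions (or just any bijection between $\omega^n$ and $\omega$)'' and then observes that the coherent family of $T$ descends to the factors. You have merely written out the details (well-definedness of the descended maps via the lifting property, and the verification of commutativity, coherence and transitivity in the quotient) that the paper leaves to the reader.
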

\begin{proof}
This is just a simpler variant of the construction in the proof
of Theorem \ref{thm:str_hom=free_x_free} where
we use only the first row of the matrix of partitions
(or just any bijection between $\omega^n$ and $\omega$).
It is then easy to verify that the coherent family of $T$ descends
to the thus obtained factor trees and renders them strongly homogeneous.
\end{proof}

\begin{rem}
\label{rem:decomp}
\begin{enumerate}[(i)]
\item Though, of course, not every tree product
of two strongly homogeneous Souslin trees is Souslin again
(e.g.\ take $T\otimes T$),
there is a converse to the last theorem:
If $S$ and $T$ are strongly homogeneous Souslin trees and
the tree product $S\otimes T$ satisfies the c.c.c.,
then $S\otimes T$ is a strongly homogeneous Souslin tree as well.
\item We see that there are two essentially distinct ways to decompose
a strongly homogeneous tree into (at least three) free factors.
An application of Theorem \ref{thm:str_hom=str_hom_x_str_hom}
to decompose a given strongly homogeneous Souslin tree $T$ into $\ell$
strongly homogenous factors $S_0,\ldots,S_{\ell-1}$
followed by an $\ell$-fold application of the procedure used in the proof of
Theorem \ref{thm:str_hom=free_x_free} to decompose the tree $S_k$ into $m_k$
free trees $R^k_i$ for $0\leq i < m_k$
never results in the same decomposition
as directly using the proof of Theorem
\ref{thm:str_hom=free_x_free} to decompose $T$ into
$\sum_{k=0}^{\ell-1} m_k$ free factors.
The partial products of the latter decomposition are all rigid by
Proposition \ref{prp:decomp} while
the first also has partial products that are strongly homogeneous.
\end{enumerate}
\end{rem}

\section{Separating high degrees of rigidity}
\label{sec:separating}

In this chapter we review several families of rigidity notions
for Souslin trees, all of them weaker than freeness.
These definitions (except for that of an \emph{$n$-free} Souslin tree)
are all taken from \cite{degrigST}.
Most of these definitions refer to the technique of forcing applied
with a Souslin tree as the forcing partial order.
We do not review forcing here.
But recall,
that forcing with a Souslin tree always assumes the inverse order on the tree
(i.e., trees grow downwards when considered as forcing partial orders,
the root is the maximal element, etc.) and adjoins a cofinal branch.

This section is divided in five short subsections.
The first two introduce the rigidity notions to be considered
and the last three state many and prove some separations between them.
We only give proofs that either are elementary or use the
proof of the Decomposition
Theorem \ref{thm:str_hom=free_x_free}.

\subsection{Parametrized freeness}
\label{sec:free}

Considering the definition of the property \emph{free} for Souslin trees
it is natural to ask whether or not it makes any difference
if the number of the factors in the tree products,
that are required to be Souslin, is bounded.
This leads to the following definition which we rightaway connect
to the definition of \emph{being $n$-fold Souslin off the generic branch}
met in \cite{degrigST}.

\begin{defi}
Let $n$ be a positive natural number.
\begin{enumerate}[a)]
\item We say that a Souslin tree $T$ is \emph{$n$-free} if for every subset
$P$ of size $n$ of some level $T_\alpha$, $\alpha<\omega_1$, the
tree product $\bigotimes_{s\in P} T(s)$ satisfies the c.c.c.
\item A Souslin tree is said to be
$n$\emph{-fold Souslin off the generic branch},
if for any sequence $\vec{b}=(b_0,\ldots,b_{n-1})$
generic for the $n$-fold forcing product of (the inverse partial order
of) $T$ and any node $s\in T\setminus\bigcup_{i\in n}b_i$,
the subtree $T(s)$ of all nodes of $T$ above $s$ is a Souslin tree
in the generic extension $M[\vec{b}]$ (which amounts to requiring that the
adjunction of $\vec{b}$ does not collapse $\omega_1$ and preserves the
c.c.c. of the $T(s)$, $s\notin\bigcup b_i$).
\end{enumerate}
\end{defi}
It is easy to see that a 2-free Souslin tree or a tree which is Souslin
off the generic branch cannot be decomposed as the product of two Souslin
trees. And this common feature is no coincidence.

\begin{prp}\label{prp:free_Sotgb}
For a positive natural number $n$ and a normal Souslin tree $T$
the following statements are equivalent.
\begin{enumerate}[a)]
\item $T$ is $n$-fold Souslin off the generic branch.
\item $T$ is $(n\!+\!1)$-free.
\end{enumerate}
\end{prp}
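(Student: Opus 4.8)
The plan is to prove the equivalence by unwinding both definitions into statements about antichains in finite tree products and then observing that forcing with the $n$-fold product of $T$ (ordered by reverse tree order) generically selects $n$ branches whose nodes are exactly the conditions met by the generic filter. The key conceptual bridge is that a Souslin tree $T$, as a forcing notion, is c.c.c., and that the $n$-fold forcing product $T^n$ is precisely the tree product $\bigotimes_{i<n}T$ as a partial order; a generic $\vec b=(b_0,\dots,b_{n-1})$ picks out a cofinal branch through each factor, and at each level $\alpha$ the generic determines a node $(b_0\uhr\alpha,\dots,b_{n-1}\uhr\alpha)$ of the product.

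First I would prove that b) implies a). Assume $T$ is $(n\!+\!1)$-free and fix $s\in T\setminus\bigcup_{i<n}b_i$. In the extension $M[\vec b]$ one must show $T(s)$ is Souslin, which (since new uncountable antichains cannot appear in a c.c.c.\ extension without already being predicted) reduces to showing that in the ground model the statement ``$T(s)$ has the c.c.c.\ after adding $\vec b$'' is forced. The standard translation is: an antichain of $T(s)$ in $M[\vec b]$ gives rise, via the $n$ generic branches together with the node $s$, to an antichain of the $(n\!+\!1)$-fold product $\bigotimes_{i<n}T(b_i\uhr\beta)\otimes T(s)$ for suitable $\beta$ with $s\uhr\beta, b_i\uhr\beta$ all on level $\beta$ and pairwise distinct (here I use that $s$ lies off all the generic branches, so for large enough $\beta$ its level-$\beta$ predecessor differs from each $b_i\uhr\beta$). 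Since these are $n\!+\!1$ distinct nodes of $T_\beta$, $(n\!+\!1)$-freeness makes this product c.c.c., hence the image antichain is countable, so the original one was too. The preservation of $\omega_1$ follows from the c.c.c.\ of the product as well.

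For the converse, that a) implies b), I would argue contrapositively. Suppose $T$ is not $(n\!+\!1)$-free, so there are $n\!+\!1$ distinct nodes $s_0,\dots,s_n\in T_\alpha$ and an uncountable antichain $A\subseteq\bigotimes_{i\le n}T(s_i)$. The idea is to force with the $n$-fold product $T^n$ below the condition $(s_0,\dots,s_{n-1})$; the generic $\vec b$ then passes through $s_0,\dots,s_{n-1}$, and I would use a genericity/density argument to show that the uncountable antichain $A$ projects to an uncountable antichain of $T(s_n)$ in $M[\vec b]$, witnessing that $T(s_n)$ fails to be Souslin off the generic branch (with $s_n$ off all the $b_i$ because it is incomparable to $s_0,\dots,s_{n-1}$ at level $\alpha$). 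The density argument shows that cofinally often the generic branches $b_i$ meet the first $n$ coordinates of members of $A$, so the last coordinates form an uncountable antichain of $T(s_n)$ surviving into the extension.

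The main obstacle will be the correct bookkeeping in the direction a) $\Rightarrow$ b): one must ensure that when reflecting an antichain from the generic extension back to a finite ground-model product, the relevant level $\beta$ can be chosen uniformly and that the node $s$ genuinely contributes a distinct $(n\!+\!1)$st coordinate, which hinges on $s$ lying off every generic branch only \emph{eventually}. The normality of $T$ (successors in every higher level) is what guarantees that incomparable nodes stay separated at all higher levels, and this is the point I would be most careful to invoke explicitly; the rest is a routine c.c.c.-preservation argument combined with the observation that antichains in the $n$-fold product correspond exactly to maximal-antichain considerations for the forcing $T^n$.
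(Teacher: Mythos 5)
Your direction b) $\Rightarrow$ a) is essentially the paper's argument (separate $s$ from the generic branches at a high enough level, invoke $(n\!+\!1)$-freeness and the standard two-step iteration lemma for the c.c.c.), and it is fine. The gap is in a) $\Rightarrow$ b). Your contrapositive rests on the claim that, forcing with $T^{\otimes n}$ below $(s_0,\dots,s_{n-1})$, ``cofinally often the generic branches meet the first $n$ coordinates of members of $A$,'' so that the last coordinates of those members form an uncountable antichain of $T(s_n)$ in $M[\vec b]$. This is false in general: the generic filter is a filter, hence contains at most one element of any antichain of $\bigotimes_{i<n}T(s_i)$. If the failure of $(n\!+\!1)$-freeness is already concentrated in the first $n$ factors --- say $A$ is obtained from an uncountable antichain $A'$ of $\bigotimes_{i<n}T(s_i)$ by appending to each member a fixed node above $s_n$ --- then the generic meets the first-$n$-coordinate projection of $A$ in at most one element, your projected set is a singleton, and no density argument can produce more. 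In that situation $T(s_n)$ may well stay Souslin; the failure of a) has to be located elsewhere (in the collapse of $\omega_1$ or in a smaller sub-product), and your proposal never addresses this case.

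The paper sidesteps the problem by running the implication positively, as an induction on $m\le n+1$: assuming $T$ is already $m$-free (so $\bigotimes_{i<m}T(s_i)$ is c.c.c.) and using a) to see that this forcing keeps $\check{T}(s_m)$ Souslin, the two-step iteration $\bigotimes_{i<m}T(s_i)\ast\check{T}(s_m)\cong\bigotimes_{i<m+1}T(s_i)$ is c.c.c.\ by the product lemma. If you insist on the contrapositive, you must add the missing recursion: take the least $m$ for which some $(m\!+\!1)$-fold product fails the c.c.c.; then the $m$-fold product is c.c.c., the iteration lemma yields a condition forcing an uncountable antichain into $\check{T}(s_m)$, and you extend that to a generic for the full $n$-fold product (noting that a collapse of $\omega_1$ would also refute a)). Either way, the descent through the number of factors is the essential ingredient your sketch is missing.
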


\begin{proof}
We start with the implication (b$\to$a).
Assume that $T$ is $n+1$-free and let $\vec{b}=(b_0,\ldots,b_{n-1})$ be
generic for $T^{\otimes n}$, the $n$-fold tree product of $T$ with itself.
Choose $\alpha<\omega_1$ large enough,
such that the nodes $t_i:=b_i(\alpha)$ are pairwise incompatible.
Finally, pick a node $t_n\in T_\alpha$ distinct from all the $b_i(\alpha)$.
By our freeness assumption on $T$,
the product tree $\bigotimes_{i\in{n+1}}T(t_i)$
satisfies the countable chain condition.
But then $M[\vec{b}]\vDash$''$T(t_n)$ is Souslin''
by a standard argument concerning chain conditions in forcing iterations.
Now it is easy to see that $T$ is $n$-fold Souslin off the generic branch.

For the other direction we inductively show that
$T$ is $m$-free for $m\leq n+1$,
assuming that $T$ is $n$-fold Souslin off the generic branch.
The inductive claim is trivial for $m=1$. 
So let $m\geq1$ and let $s_0,\ldots,s_m$ be pairwise distinct nodes
of the same height.
Then for any generic sequence
$\vec{b}=(b_0,\ldots,b_{m-1})$ for $\bigotimes_{i\in m}T(s_i)$
we know that $T(s_m)$ is Souslin in the  generic extension $M[\vec{b}]$.
Finally the two-step iteration $\bigotimes_{i\in m}T(s_i)\ast \check{T}(s_m)$
is isomorphic to $\bigotimes_{i\in m+1}T(s_i)$
and satisfies the countable chain condition.
\end{proof}

This proposition implies that a free tree $T$ is also
\emph{free off the generic branch} in the sense that
in the generic extension obtained by adjoining
a cofinal branch $b$ through $T$, for every node $t\in T\setminus b$,
the tree $T(t)$ is still free.

\subsection{Further types of rigidity}

In Sections 1-4 of \cite{degrigST} different notions of rigidity
for Souslin trees are collected:
(ordinary) rigidity, total rigidity and the unique branch property and their
absolute counterparts,
where absoluteness refers to forcing extensions obtained by
adjoining a generic branch to the Souslin tree under consideration.
In this context also the stronger notion of being
($n$-fold) Souslin off the generic branch is introduced 
which we already considered in the last section.

\begin{defi}
\begin{enumerate}[a)]
\item A Souslin tree $T$ is called \emph{$n$-absolutely rigid}, if $T$ is a rigid tree in the generic extension
obtained by forcing with $T^n$ (or equivalently $T^{\otimes n}$).
\item A Souslin tree is \emph{totally rigid}, if the trees $T(s)$ and $T(t)$ are non-isomorphic for all pairs
of distinct nodes $s$ and $t$ of $T$. It is \emph{$n$-absolutely totally rigid} if it is totally rigid
after forcing with $T^n$.
\item A Souslin tree $T$ has the \emph{unique branch property (UBP)}, if forcing with $T$ adjoins only a single
cofinal branch to $T$. For $n>0$ we say, that $T$ has the $n$-\emph{absolute UBP}, if forcing with
$T^{n+1}$ adjoins exactly $n+1$ cofinal branches to $T$.
\end{enumerate}
\end{defi}

Fuchs and Hamkins prove implications as well as some independencies
between these rigidity notions.
They also give in \cite[Section 4]{degrigST}
a diagram of implications between the
degrees of rigidity that we have approximately
reconstructed here for the convenience of the reader.

\begin{table}[htb]
\begin{center}
\begin{small}
\begin{tabular}{ccccccc}
2-free&$\longleftarrow$&\makebox[2.5cm][c]{3-free}&$\longleftarrow$&\makebox[2.5cm][c]{4-free}&$\longleftarrow$&$\ldots$\\
$\downarrow$&&$\downarrow$&&$\downarrow$&&\\
UBP&$\longleftarrow$&\makebox[2.5cm][c]{absolutely UBP}&$\longleftarrow$&\makebox[2.5cm][c]{2-absolutely UBP}&$\longleftarrow$&$\ldots$\\
$\downarrow$&&$\downarrow$&&$\downarrow$&&\\
totally rigid&$\longleftarrow$&\makebox[3cm][c]{abs. totally rigid}&$\longleftarrow$&
\makebox[3cm][c]{2-abs. totally rigid}&$\longleftarrow$&$\ldots$\\
$\downarrow$&&$\downarrow$&&$\downarrow$&&\\
rigid&$\longleftarrow$&\makebox[2.5cm][c]{absolutely rigid}&$\longleftarrow$&\makebox[2.5cm][c]{2-absolutely rigid}&$\longleftarrow$&$\ldots$\\
&&&&&&
\end{tabular} \end{small}
\end{center}
\caption{Implications between degrees of rigidity for Souslin trees.}
\label{diagram}
\end{table}

Fuchs and Hamkins show that
the part of the diagram to the left and below ``absolutely UBP''
is complete in the sense that there are no further
general implications between these rigidity properties.
They ask whether the rest of the diagram
is complete as well, cf.~\cite[Question 4.1]{degrigST}.
We will show (resp. state) below that there are
neither implications from left to the right
(including downwards diagonals,
cf.~Corollaries \ref{cor:n-free} and \ref{cor:n-free_not_UBP}),
nor from the second to the upper row (Theorem \ref{thm:not_simple_UBP}).

\begin{rem}
\label{rem:diagram}
\item Using a standard $\dia$-construction
scheme for a Souslin tree (e.g., cf. \cite[Section 2]{degrigST})
it is not hard to construct a Souslin tree $T$ with the following two
features:
\begin{itemize}
\item On each level $T_\alpha$ no two distinct nodes
have the same number of immediate successors.
So in particular $T$ is $n$-absolutely totally rigid
for every $n\in \omega$.
\item The substructure $R$ of $T$ obtained by
restricting the supporting set to the nodes on the limit levels of $T$
plus the root, is a homogeneous Souslin tree.
Then in a generic extension obtained by forcing with $T$ there are
many cofinal branches in $R$ and each of them gives rise to a cofinal branch
of $T$, which is thus not a UBP tree.
(In fact, every $\aleph_0$-spiltting Souslin tree can be extended to an
$n$-absolutely totally rigid Souslin tree by inserting new successor levels 
such that every two nodes of the same height have a different number of
immediate successors.)
\end{itemize}
This shows that in Diagram \ref{diagram} there can be no arrows that
point upwards from the two lower rows.
So the only question left open is whether there should be any more arrows
between the two lower rows, but a similar construction as the one alluded to
above should also eliminate those.
\end{rem}

\subsection{Distinct degrees of freeness}

Our next corollary of (the proof of)
Theorem~\ref{thm:str_hom=free_x_free} gives the separation of
the finite degrees of freeness, i.e.,
it shows that the family of parametrized freeness
conditions is properly increasing in strength.

\begin{cor}\label{cor:n-free}
If there is a strongly homogeneous Souslin tree,
then there is an $n$-free, but not $n+1$-free tree.
\end{cor}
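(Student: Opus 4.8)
The plan is to re-run the construction from the proof of Theorem~\ref{thm:str_hom=free_x_free}, but to feed it a family of partitions whose degree of ``agreement'' is deliberately capped at $n$, so that the resulting quotient tree has freeness degree exactly $n$. The motivating base case is $n=1$: every Souslin tree is trivially $1$-free, while a strongly homogeneous tree $T$ is never $2$-free, since $T(s)\cong T(t)$ for nodes $s\neq t$ of the same height forces $T(s)\otimes T(t)$ to contain a diagonal copy of the non-c.c.c.\ tree $T(s)\otimes T(s)$. Thus $T$ itself witnesses $n=1$, and this already reveals the mechanism: failure of $(k{+}1)$-freeness is to be produced by an isomorphism (more generally, a collapse) reappearing among $k+1$ relative trees.

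For general $n$ the free factors produced by Theorem~\ref{thm:str_hom=free_x_free} do not help directly: a partial product $\bigotimes_{i<k}S_i$ of at least two factors already fails to be $2$-free (take two nodes agreeing in one coordinate and differing in another, so that coordinate contributes a factor $S_i(x)\otimes S_i(x)$). Hence the decomposition machinery by itself only yields the freeness degrees $1$ and $\infty$, and a new combinatorial input is needed. I therefore replace the $n$-optimal matrix by a single column $(Q_k\mid k\in\omega)$ of infinite partitions $Q_k=(a^k_i\mid i\in\omega)$ of $\omega$ with the following \emph{degree-$n$} properties: (i$'$) any intersection $\bigcap_{j<\ell}a^{k_j}_{i_j}$ of $\ell\le n$ blocks from pairwise distinct partitions is infinite; and (ii$'$) for some choice of $n+1$ pairwise distinct partitions the intersection $\bigcap_{j\le n}a^{k_j}_{i_j}$ is a singleton. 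Such a family exists by a straightforward modification of the recursion in the proof of Lemma~\ref{lm:opt_mtx_prt}: the reservoir argument is now only required to keep $\ell$-fold intersections infinite for $\ell\le n$, while the ``$y$''-part is used to drive the prescribed $(n{+}1)$-fold intersection down to a single point.

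Transferring this single column coherently to every successor set $\NF(s)$ by the coherent family of $T$, exactly as in the proof of Theorem~\ref{thm:str_hom=free_x_free}, and defining from it one nice t.e.r.\ $\equiv$ level by level, I obtain the candidate tree $W:=T/\!\!\equiv$. The verification that $W$ is $n$-free is then a verbatim repetition of the \textbf{Claim} in that proof: for $k\le n$ pairwise non-equivalent nodes $s_0,\dots,s_{k-1}$ and any higher $t_i>s_i$, property (i$'$) guarantees that the shifted classes $\psi_{s_i,s_0}''(t_i/\!\!\equiv)$ still have infinite intersection, and this suffices to lift any antichain of $\bigotimes_{i<k}W(s_i)$ to an antichain of the Souslin tree $T$, which must be countable. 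So every $n$-fold product of relative trees of $W$ satisfies the c.c.c.

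The main obstacle is to show that $W$ is \emph{not} $(n{+}1)$-free, i.e.\ to convert the local collapse in (ii$'$) into an uncountable antichain. Here I would mirror the singleton argument in the proof of Proposition~\ref{prp:decomp}: using (ii$'$) one locates $n+1$ pairwise distinct nodes $\bar s_0,\dots,\bar s_n$ of $W$ at some level such that, along cofinally many higher levels, $n$ of the corresponding classes pin down the $(n{+}1)$-st via the coherent family, so that two of the relative trees $W(\bar s_i)$ become linked by an isomorphism inside $\bigotimes_{i\le n}W(\bar s_i)$; the diagonal of that isomorphism yields an uncountable antichain, just as for the square of a Souslin tree. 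Equivalently, and perhaps more cleanly, one can invoke Proposition~\ref{prp:free_Sotgb} and instead show that $W$ fails to be $n$-fold Souslin off the generic branch, the collapse (ii$'$) being exactly what lets $n$ generic branches force a cofinal branch into some $W(s)$ with $s$ off the branches. Making this reappearance-of-a-branch precise, while tracking the coherent family through the $n$ generic coordinates, is the delicate step, and it is where the exact calibration of (i$'$) against (ii$'$) is consumed.
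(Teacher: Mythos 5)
Your positive half is fine: with a single column of partitions whose $\le n$-fold intersections across distinct partitions are infinite, the Claim-style induction lifts antichains of any $\le n$-fold product of relative trees of $W=T/\!\!\equiv$ into $T$, so $W$ is $n$-free. The gap is in the negative half, and it is not a deferred technicality but a flaw in the calibration. The antichain-lifting argument uses only \emph{non-emptiness} of the intersections $\bigcap_{i}\psi_{s_i,s_0}''(t_i/\!\!\equiv)$, and your condition (ii$'$) makes the relevant $(n{+}1)$-fold intersections \emph{singletons}, hence non-empty; one further step of the same induction shows a singleton at level $\beta$ propagates to an intersection of $n+1$ blocks of distinct partitions of $\NF(u)$ at level $\beta+1$. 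So wherever the $(n{+}1)$-fold intersections stay non-empty, your own lifting argument proves that $W$ is $(n{+}1)$-\emph{free}; and wherever you arrange them to be empty, you have merely disabled the lifting argument, not produced an uncountable antichain. A pointwise collapse to a singleton is simply not an obstruction to the c.c.c.\ of the product. What is needed is a \emph{linkage} between coordinates (for instance that $n$ of the classes determine the $(n{+}1)$-st, so that two relative trees become isomorphic over the others), and neither (ii$'$) nor your forcing reformulation via Proposition \ref{prp:free_Sotgb} supplies one: since \emph{every} choice of the $(n{+}1)$-st class meets the other $n$ in some singleton, the $n$ generic branches pin down no branch of any $W(s)$.

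The paper avoids all of this by a move you ruled out too quickly when you concluded that ``the decomposition machinery by itself only yields the freeness degrees $1$ and $\infty$'': it keeps the $n$-optimal matrix and the $n$ free factors of Theorem \ref{thm:str_hom=free_x_free} unchanged, and takes their tree \emph{sum} $R=\bigoplus_{i<n}S_i$ rather than any product or any new quotient. Any $n$ relative trees of $R$ live in the various quotients $T/\!\!\equiv_m$, and the full strength of $n$-optimality (the variant Claim$'$, with an arbitrary assignment $m:n\to n$ of coordinates) again yields non-empty intersections in $T$, hence $n$-freeness. Non-$(n{+}1)$-freeness is then automatic precisely because the factors \emph{recombine}: one generic branch through each $S_i$ reassembles into a cofinal branch of $T\cong\bigotimes_{i<n}S_i$, which by strong homogeneity produces cofinal branches through every relative tree of every summand, so $R$ is not $n$-fold Souslin off the generic branch. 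If you insist on a single-quotient construction, you would have to recalibrate (ii$'$) so that $(n{+}1)$-fold intersections are infinite or empty in a pattern that makes one coordinate a function of the other $n$ --- at which point you are rebuilding the product decomposition by hand.
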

\begin{proof}
Let the strongly homogeneous Souslin tree $T$ be decomposed
as the tree product of $n$ free trees $S_i$ for $i<n$
as carried out in the proof of Theorem~\ref{thm:str_hom=free_x_free}.
We show, that the tree sum of the factors,
$$R:=\bigoplus_{i<n}S_i$$
is an $n$-free but not $n+1$-free Souslin tree.
The Claim used in the proof of Theorem~\ref{thm:str_hom=free_x_free}
remains true in the following variant:\\
{\bf Claim'.} For any pair of sequences $(s_0,\ldots,s_{n-1})$ in $T_\alpha$ and $t_i>s_i$ in $T_\beta$ and
any sequence $m:n\to n$ the intersection
$$\bigcap_{i<n}{\psi_{s_i,s_0}}''t_i/\!\!\equiv_{m(i)}$$
is not empty.

Modulo the obvious changes in the notation,
the proof of the Claim' remains completely the same as before,
exploiting the $n$-optimality of the matrix.
And also with the same argument as above we can construct
an antichain of $T$ from any given antichain of $R$
maintaining the cardinality. So $R$ is $n$-free.

We argue that $R$ is not $n$-fold Souslin off the generic branch.

If $b_i$ is a cofinal branch through $S_i$
then in the generic extension obtained by adjoining
$\vec{b}=(b_0,\ldots,b_{n-1})$, the strongly homogeneous
tree $T$ has a cofinal branch as well,
thus destroying the Souslinity of all subtrees of $R$.
\end{proof}

\subsection{Freeness and absolute rigidity}

In this section we improve upon the result of the last one
by showing that $n$-freeness of a tree does not even imply
$(n\!-\!1)$-absolute rigidity.

\begin{lm}\label{lm:free_to_str_hom}
Let the strongly homogeneous Souslin tree $T$ be
decomposed as the tree product of $n$ free trees $S_i$ for $i<n$
as carried out in the proof of Theorem~\ref{thm:str_hom=free_x_free}.
Let $\{a,b\}$ be a partition of the set $n$ with $a,b\neq\varnothing$
and set $\mathbb{P}:=\bigotimes_{i\in a}S_i$ and $R:=\bigotimes_{i\in b}S_i$.
Then
$$\vDash_{\mathbb{P}}\text{``}\check{R}\text{ is strongly homogeneous.''}$$
\end{lm}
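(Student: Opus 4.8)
The plan is to transfer the coherent family $(\psi_{s,t})$ of $T$ to $R$ along the generic branch added by $\mathbb{P}$. Identify $T$ with $\mathbb{P}\otimes R\cong\bigotimes_{m<n}S_m$, so that a node of $T_\alpha$ is a tuple of classes $(c_i:i<n)$ with $c_i\in(T/\!\!\equiv_i)_\alpha$, split into an $a$-part and a $b$-part; the $b$-part alone is a node of $R_\alpha$ and the $a$-part a node of $\mathbb{P}_\alpha$. Let $G$ be $\mathbb{P}$-generic. For each $\alpha<\omega_1$, $G$ selects the generic classes $g_i(\alpha)$ ($i\in a$) on the generic branch of $\mathbb{P}$, and for $\bar r\in R_\alpha$ I let $\hat r\in T_\alpha$ be the unique node (unique by the second decomposition property) whose $a$-components are the $g_i(\alpha)$ and whose $b$-components are those of $\bar r$. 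Because the generic branch is coherent, i.e.\ $p_\alpha\uhr\gamma=p_\gamma$, the assignment $\bar r\mapsto\hat r$ is an order embedding of $R$ onto the \emph{generic slice} $\{t\in T:t/\!\!\equiv_i=g_i(\hgt t)\text{ for all }i\in a\}$.

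I will then define the coherent family of $R$ in $V[G]$ by setting $\phi_{\bar r,\bar s}(\bar r')$ to be the $R$-part of $\psi_{\hat r,\hat s}(\hat{r'})$, for $\bar r,\bar s\in R_\alpha$ and $\bar r'\in R(\bar r)$. The crucial point is well-definedness: since $\hat r$ and $\hat s$ share the generic $a$-components, $\hat r\equiv_i\hat s$ for every $i\in a$, and the lifting property of the relations $\equiv_i$ established in the proof of Theorem~\ref{thm:str_hom=free_x_free} gives $\psi_{\hat r,\hat s}(x)\equiv_i x$ for all $x>\hat r$ and all $i\in a$. Applied to $x=\hat{r'}$ this shows that $\psi_{\hat r,\hat s}(\hat{r'})$ again lies in the generic slice, hence equals $\hat{r''}$ for a unique $\bar r''\in R$, which I declare to be $\phi_{\bar r,\bar s}(\bar r')$. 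As $\psi_{\hat r,\hat s}$ is a tree isomorphism $T(\hat r)\to T(\hat s)$ that maps the generic slice bijectively onto the generic slice (the inverse $\psi_{\hat s,\hat r}$ doing the same by symmetry), and the slice is order-isomorphic to $R(\bar r)$, resp.\ $R(\bar s)$, each $\phi_{\bar r,\bar s}$ is an isomorphism $R(\bar r)\to R(\bar s)$.

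Next I would check the four axioms of a coherent family for $(\phi_{\bar r,\bar s})$, each reducing to the corresponding axiom for $(\psi_{s,t})$. Identity, commutativity and coherence descend immediately upon projecting to $R$. For transitivity, given $\bar t,\bar u$ on a limit level $R_\lambda$, I apply transitivity of $(\psi_{s,t})$ to the lifts $\hat t,\hat u\in T_\lambda$, obtaining $\gamma<\lambda$ with $\psi_{\hat t\uhr\gamma,\,\hat u\uhr\gamma}(\hat t)=\hat u$; coherence of the generic branch identifies $\hat t\uhr\gamma$ with $\widehat{\bar t\uhr\gamma}$ and $\hat u\uhr\gamma$ with $\widehat{\bar u\uhr\gamma}$, so this projects to $\phi_{\bar t\uhr\gamma,\,\bar u\uhr\gamma}(\bar t)=\bar u$. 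Finally, $R$ is still Souslin in $V[G]$: since $\mathbb{P}\otimes R\cong T$ is Souslin, hence c.c.c., and is dense in $\mathbb{P}\times R$, we get $\Vdash_{\mathbb{P}}\check R$ is c.c.c.; as $\mathbb{P}$ preserves $\omega_1$, $R$ remains a normal, $\aleph_0$-splitting tree of height $\omega_1$, and such a tree without uncountable antichains has no cofinal branch.

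The main obstacle is precisely the well-definedness of $\phi_{\bar r,\bar s}$, namely the fact that transport by $\psi_{\hat r,\hat s}$ keeps a node on the generic slice. This is exactly where the lifting property of the $\equiv_i$, together with the observation that $\hat r$ and $\hat s$ agree on their $a$-components, are indispensable; without it the projection to $R$ would fail to be a function and the construction would collapse.
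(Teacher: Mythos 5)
Your proof is correct and follows the same overall strategy as the paper's: identify $R$ with the generic slice $c\otimes R$ of $T$ and push the coherent family $(\psi_{st})$ down to $R$. The difference lies in how the individual isomorphisms are defined and justified. The paper only uses the direct formula $\pi\circ\psi_{\rho(r)\rho(s)}\circ\rho$ when the book-keeping values $h(\rho(r),j)$ and $h(\rho(s),j)$ agree for all $j\in b$ (so that $\psi_{\rho(r)\rho(s)}$ carries $\equiv_j$-classes of immediate successors to $\equiv_j$-classes), and in the remaining case it resorts to a compositional definition through immediate successors matched by their $h$-values via $n$-optimality. You use the direct formula uniformly and justify it by a different mechanism: $\hat r\equiv_i\hat s$ for every $i\in a$ because both lifts carry the generic $a$-components, so the lifting property of the relations $\equiv_i$ established inside the proof of Theorem \ref{thm:str_hom=free_x_free} shows that $\psi_{\hat r\hat s}$ maps the generic slice bijectively onto the generic slice, which is exactly what is needed for $\pi\circ\psi_{\hat r\hat s}\circ\rho$ to be an isomorphism of $R(\bar r)$ onto $R(\bar s)$, independently of the $h$-values on the $b$-coordinates. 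This uniformity pays off when checking commutativity, coherence and transitivity, which for you are immediate projections of the corresponding properties of $(\psi_{st})$, whereas the paper's two-case definition in principle requires checking that the cases interact correctly. You also record the (routine) argument that $R$ stays Souslin in $V[G]$, which the paper leaves implicit. I see no gap in your argument.
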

\begin{proof}
We adopt the notation from the statement of the lemma and
argue inside the generic extension obtained by adjoining a generic branch
$c$ to the Souslin tree $\mathbb{P}$.
Then we have the natural isomorphism $\rho:R\cong c\otimes R\subset T$.
Denote the canonical projection $T\to R$ by $\pi$.

We define the tree isomorphisms $\varphi_{{r}{s}}$
(members of the coherent family of $R$ in the generic extension)
for nodes ${r}$ and ${s}$ of $R_\alpha$, $\alpha<\omega_1$
from the members $\psi_{\rho({r})\rho({s})}$ of the coherent family of $T$.
For this, we refer to the maps $h(\cdot,j):T\to\omega$
used in the construction of the t.e.r.s $\equiv_j$ in the
proof of Theorem~\ref{thm:str_hom=free_x_free}.
We collect them and define $h:T\to\omega^{|b|}$ by
simply concatenating the values $h(t,j)$ for $t\in T$ and $j\in b$.

If ${r},\,{s}\in R_\alpha$ and $h(\rho({r}))=h(\rho({s}))$,
then we let
$$\varphi_{{r}{s}}:=
\pi\circ\psi_{\rho({r})\rho({s})}\circ\rho:R({r})\to R({s}).$$
It follows from the fact that the $n$-optimal partition matrices
are transported between the (sets of immediate successors of the)
nodes by the members $\psi_{tu}$ of the coherent family of $T$,
that this definition is sound and indeed yields an isomorphism.

Now let $r,s\in R_\alpha$ with $h(\rho({r}))\neq h(\rho({s}))$.
In order to define $\varphi_{{r}{s}}$ we compose the tree isomorphisms,
that we have already defined for the immediate successors of ${r}$ and ${s}$.
For every direct successor ${u}\in\NF({r})$
there is exactly one ${v}\in\NF({s})$ with $h(\rho({u}))=h(\rho({v}))$.
This follows from the $n$-optimality of the partition matrix.
Let $\varphi_{{r}{s}}({u})$ be just this ${v}$.
If ${x}$ is a non-immediate successor of ${r}$,
then first find the immediate successor ${u}$ of ${r}$
below $x$ and the image $v=\varphi_{rs}(u)$,
and set
$$\varphi_{{r}{s}}({x}):=\varphi_{{u}{v}}({x}).$$

It remains to prove, that the family of tree isomorphisms just defined
is coherent, commutative and transitive.
Commutativity and coherence are inherited from the coherent family of $T$.
(Note that $\varphi_{rs}(x)=y$ implies that $h(\rho(x))=h(\rho(y))$,
so the two cases do not interfere.)
As for transitivity,
let $x,y\in R_\lambda$ for some countable limit ordinal $\lambda$.
Then by the transitivity of the family of the $\psi_{tu}$ for $T$
there are $t<\rho(x)$ and $u<\rho(y)$ with $\psi_{tu}(\rho(x))=\rho(y)$.
But then $t$ and $u$ lie in $b\otimes R$,
so there are $r,s\in R$ such that
$\rho(r)=t$ and $\rho(s)=u$ and thus $\varphi_{rs}(x)=y$.
\end{proof}

So, e.g.\ in the case $n=2$,
forcing with one free tree does not only destroy the freeness of another one,
but even turns the latter into a strongly homogeneous Souslin tree,
i.e., it adjoins many generic automorphisms.

\begin{cor}
\label{cor:n-free_not_UBP}
Let $n>1$.
If there is a strongly homogeneous Souslin tree,
then there is an $n$-free tree
which is not $(n\!-\!1)$-absolutely rigid.
\end{cor}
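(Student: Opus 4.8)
The plan is to reuse the tree $R=\bigoplus_{i<n}S_i$ from the proof of Corollary~\ref{cor:n-free}, where $S_0,\dots,S_{n-1}$ are the free factors obtained by decomposing a strongly homogeneous Souslin tree $T$ via Theorem~\ref{thm:str_hom=free_x_free}. That proof already shows that $R$ is an $n$-free Souslin tree, so the whole task reduces to verifying that $R$ fails to be $(n\!-\!1)$-absolutely rigid, i.e.\ that forcing with $R^{\otimes(n-1)}$ does not force $R$ to be rigid.

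First I would analyse the forcing $R^{\otimes(n-1)}$. Since $R$ is a tree sum, above the root each node lies in exactly one summand $S_i$, and this summand is already fixed at the first level and stays fixed along any branch. Hence a condition $(t_0,\dots,t_{n-2})$ of $R^{\otimes(n-1)}$ at a level above the root assigns to each of its $n-1$ coordinates a summand, and I can pass below a condition whose coordinates lie in $n-1$ \emph{pairwise distinct} summands, say in $S_0,\dots,S_{n-2}$, taking each $t_j$ to be the root of $S_j$. Below such a condition the forcing $R^{\otimes(n-1)}$ is isomorphic to $\mathbb{P}:=\bigotimes_{i<n-1}S_i$, since each chosen $t_j$ is the root of $S_j$ and hence $S_j(t_j)=S_j$.

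At this point Lemma~\ref{lm:free_to_str_hom} does the real work: applying it to the partition $\{a,b\}$ of $n$ with $a=\{0,\dots,n-2\}$ and $b=\{n-1\}$ yields $\vDash_{\mathbb{P}}$``$\check S_{n-1}$ is strongly homogeneous''. A strongly homogeneous Souslin tree is never rigid (it has $2^{\aleph_0}$ automorphisms, cf.\ the remark after Proposition~\ref{prp:str_hom_aut}), so in the generic extension there is a nontrivial automorphism $\theta$ of $S_{n-1}$. I would then extend $\theta$ to an automorphism $\Theta$ of $R$ by letting $\Theta$ fix the root and every other summand pointwise and act as $\theta$ on $S_{n-1}$; since $\Theta$ preserves the tree order and is nontrivial, $R$ is not rigid in the extension. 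As the isomorphism of the previous paragraph identifies this $\mathbb{P}$-generic extension with the extension below a suitable condition of $R^{\otimes(n-1)}$, some condition of $R^{\otimes(n-1)}$ forces $R$ to be non-rigid, which is exactly the failure of $(n\!-\!1)$-absolute rigidity.

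The main obstacle I expect is the bookkeeping in the second step: one has to be sure that the summand-assignment of the coordinates of a condition is genuinely stable along the tree order (so that passing below a condition really restricts the generic to $n-1$ fixed distinct summands), and that the restricted forcing is literally the product $\mathbb{P}$ rather than merely densely embeddable into it. Everything else — the $n$-freeness of $R$, the existence of the automorphism of $S_{n-1}$, and its extension to $R$ — is either quoted from earlier results or entirely routine.
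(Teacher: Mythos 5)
Your proposal is correct and follows essentially the same route as the paper: take $R=\bigoplus_{i<n}S_i$ from Corollary~\ref{cor:n-free} for $n$-freeness, and apply Lemma~\ref{lm:free_to_str_hom} with $a=n\setminus\{i\}$, $b=\{i\}$ to make $S_i$ strongly homogeneous (hence non-rigid, and hence $R$ non-rigid) after forcing with $\bigotimes_{j\neq i}S_j$, observing that this extension is reachable by forcing with $R^{\otimes(n-1)}$. Your extra care in passing below a condition of $R^{\otimes(n-1)}$ whose coordinates are the roots of $n-1$ distinct summands just spells out the step the paper leaves implicit.
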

\begin{proof}
We fix $n>1$ and
use the tree $R$ from the proof of Corollary~\ref{cor:n-free}
obtained from a strongly homogeneous tree $T$
as the tree sum $R=\bigoplus_{i<n}S_i$
of the free factors $S_i$, $i<n$ of $T$.
From Corollary~\ref{cor:n-free} we know, that $R$ is $n$-free.

To show that $R$ is not $(n\!-\!1)$-absolutely rigid we refer
to Lemma~\ref{lm:free_to_str_hom}.
It follows directly from the case that $a=n\setminus\{i\}$ for some $i<n$
that $R$ is not rigid in the generic extension obtained by adjoining
a cofinal branch through the trees $S_j$ for $j<n$ and $j\neq i$.
But this generic extension can also be reached by forcing with
$R^{\otimes n-1}$.
\end{proof}

\subsection{Freeness and the unique branch property}

We start with an easy result deduced from the elementary properties
of finitely free trees for the second column of the diagram.

\begin{prp}
\label{prp:3-free+UBP}
If there is a 3-free Souslin tree,
then there is also a Souslin tree which has the UBP and is not 2-free.
\end{prp}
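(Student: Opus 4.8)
The plan is to start from a 3-free Souslin tree $T$, to fix three pairwise distinct nodes $s_0,s_1,s_2$ on a common level of $T$ (possible since $T$ is $\aleph_0$-splitting), and to form the tree sum
$$R:=\bigl(T(s_0)\otimes T(s_1)\bigr)\oplus\bigl(T(s_0)\otimes T(s_2)\bigr).$$
The intuition is that repeating the factor $T(s_0)$ in both summands forces a square $T(s_0)\otimes T(s_0)$ to appear, which destroys 2-freeness, while the two \emph{distinct} ``guard'' factors $T(s_1)$ and $T(s_2)$ are exactly what will block any superfluous cofinal branch and thereby preserve the UBP. Throughout I would read ``$T$ is 3-free'' as ``$T$ is 2-fold Souslin off the generic branch'' via Proposition~\ref{prp:free_Sotgb}, and I would use that 3-free trees are 2-free (Diagram~\ref{diagram}).

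First I would dispatch the routine points. Writing $A:=T(s_0)\otimes T(s_1)$ and $B:=T(s_0)\otimes T(s_2)$, both are $\aleph_0$-splitting, c.c.c.\ by 2-freeness, and have no cofinal branch in the ground model (a cofinal branch of a product would project to cofinal branches of its Souslin factors), hence both are normal Souslin trees; and then $R$, being a tree sum of two Souslin trees, is a normal Souslin tree. For the failure of 2-freeness I would use the two level-$1$ nodes of $R$, namely the roots $r_A,r_B$ of the two summands: here $R(r_A)\otimes R(r_B)=A\otimes B$ projects onto $T(s_0)\otimes T(s_0)$ by restricting to the two repeated coordinates, and since preimages of an antichain under a level-preserving surjection form an antichain, the non-c.c.c.\ of the square of the Souslin tree $T(s_0)$ lifts to $A\otimes B$. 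Thus $R$ is not 2-free.

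The core of the argument, and the step I expect to be the main obstacle, is the UBP of $R$; the difficulty is to control all cofinal branches in an arbitrary generic extension by $R$. I would isolate this as a lemma: the product $T(s_0)\otimes T(s_1)$ of two subtrees of a 3-free tree has the UBP. To see this, note that forcing with this product is just forcing with $T\otimes T$ below the condition $(s_0,s_1)$, so it yields a $(T\otimes T)$-generic pair $(b_0,b_1)$ with $b_i$ a branch through $s_i$. If $(c_0,c_1)$ were a second cofinal branch of the product, then without loss $c_0\neq b_0$, so I would pick a node $s_0'\in c_0\setminus b_0$; as $s_0'$ lies in $T(s_0)$ it also avoids $b_1$, whence $s_0'\notin b_0\cup b_1$ and $T(s_0')$ is Souslin in the extension by 2-fold Souslin off the generic branch. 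This contradicts that the tail of $c_0$ above $s_0'$ is a cofinal branch of $T(s_0')$. Hence both $A$ and $B$ have the UBP.

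Finally I would assemble the UBP of $R$ by splitting on which summand the $R$-generic filter selects. If it selects the $A$-side, the extension is $M[b_0,b_1]$ for a $(T\otimes T)$-generic $(b_0,b_1)$ through $(s_0,s_1)$; the $A$-side then contributes only the branch $(b_0,b_1)$ by the lemma, and the $B$-side contributes no branch at all, because its guard factor satisfies $s_2\notin b_0\cup b_1$ and so remains Souslin, leaving $B$ without a cofinal branch. The $B$-side case is symmetric, with $T(s_1)$ now playing the role of the guard. So $R$ has a unique cofinal branch in every generic extension, i.e.\ $R$ has the UBP although it is not 2-free. The one point demanding care is bookkeeping about copies: the occurrences of $T(s_0)$ inside $A$ and inside $B$ are \emph{distinct} copies, so forcing the $A$-side does induce, through the ground-model isomorphism, a cofinal branch in the $T(s_0)$-copy sitting inside $B$; the guard factor $T(s_2)$ is precisely what keeps this induced partial branch from completing to a cofinal branch of $B$.
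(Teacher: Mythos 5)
Your proof is correct, and its essential engine -- that a product $T(s)\otimes T(t)$ of cones over incomparable nodes of a $3$-free tree has the UBP, proved by viewing the generic as a $(T\otimes T)$-generic pair $(b_0,b_1)$ and invoking $2$-fold Souslinity off the generic branch (Proposition \ref{prp:free_Sotgb}) at a node of a putative second branch lying outside $b_0\cup b_1$ -- is exactly the paper's argument. Where you diverge is in the choice of witness: the paper simply takes $S=T(s)\otimes T(t)$ itself, observing that it already fails to be $2$-free because the two nodes $(u,v)$ and $(u,w)$ with a \emph{shared first coordinate} $u>s$ and distinct $v,w>t$ on a common level satisfy $S(u,v)\otimes S(u,w)\cong T(u)\otimes T(v)\otimes T(u)\otimes T(w)$, which contains the square of the Souslin tree $T(u)$ as a factor. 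Your tree sum $R=(T(s_0)\otimes T(s_1))\oplus(T(s_0)\otimes T(s_2))$ manufactures this repeated factor artificially at level $1$, at the cost of having to reassemble the UBP for the sum: you must additionally argue that the summand not selected by the generic acquires no cofinal branch, which your ``guard factor'' $T(s_2)$ (resp.\ $T(s_1)$) handles correctly, again via $2$-fold Souslinity off the generic branch since $s_2\notin b_0\cup b_1$. So your construction works and even illustrates the guard mechanism nicely, but it is strictly more elaborate than necessary; the repeated coordinate you were engineering is already present inside a single product once you look at two nodes of $S$ agreeing in one component. (A cosmetic point: your $R$ is only $2$-splitting at its root, so it narrowly misses the paper's standing $\aleph_0$-splitting convention, though the paper itself tolerates this for tree sums in Corollary \ref{cor:n-free}.)
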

\begin{proof}
Let $T$ be $3$-free and pick distinct nodes $s,t\in T$ of the same height.
We show, that the Souslin tree $S=T(s)\otimes T(t)$ has the UBP\@.
Let $b\otimes c$ be a generic,
cofinal branch in $S$ (viewing $b$ and $c$ as trees).
By the 2-fold Souslinity off the generic branch of $T$,
every tree of the form $T(r)$ with $r\in T\setminus (b\cup c)$
is Souslin in the generic extension by $b\otimes c$.
On the other hand,
if there was a second cofinal branch through $S$ in the generic extension,
then one of its components would have to pass
through such a node $r\notin b\cup c$, which yields a contradiction.

To prove that $S$ is not $2$-free,
let $u>s$ and $v,w>t$ be of the same height, where $v\ne w$.
Then
$$S(u,v)\otimes S(u,w) \cong T(u)\otimes T(v)\otimes T(u)\otimes T(w)$$
has an uncountable antichain,
because it has the square of the Souslin tree $T(u)$ as a factor.
\end{proof}

This result cannot be improved by simply requiring $T$ to be free, because
by iterating the forcing with a tree product of two factors
$n\!+\!1$ times, we adjoin at least $2^n$ cofinal branches.

We do have the following non-implication result for the $n$-absolute UBP
and $2$-freeness under the stronger assumption of $\dia$.

\begin{thm}
\label{thm:not_simple_UBP}
Assume $\dia$.
Then there is a Souslin tree
which is not 2-free but has the $n$-absolute UBP for all $n\in\omega$.
\end{thm}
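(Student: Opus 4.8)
The plan is to build the tree directly by a $\dia$-recursion, designing two distinguished cones that are entangled but not isomorphic. Fix two immediate successors $s_0,t_0$ of the root, and construct a normal, $\aleph_0$-splitting Souslin tree $T$ so that, on the one hand, the product $T(s_0)\otimes T(t_0)$ carries an uncountable antichain --- whence $\{s_0,t_0\}$ witnesses that $T$ is not $2$-free --- while, on the other hand, for every $n$ the only cofinal branches of $T$ appearing after forcing with $T^{n+1}$ are the $n+1$ generic coordinate branches. The latter is exactly the UBP together with the $m$-absolute UBP for every $m$. The observation that makes the two demands compatible is that being Souslin off the generic branch (equivalently $2$-free, by Proposition~\ref{prp:free_Sotgb}) asks that \emph{every} off-branch cone stay c.c.c., whereas the UBP only forbids \emph{new cofinal branches}; a cone may acquire an uncountable antichain in a generic extension without acquiring a cofinal branch. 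I therefore aim to arrange that forcing through one of the two cones correlates to an uncountable antichain --- but to no branch --- in the other.

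For the failure of $2$-freeness I would run, inside $T(s_0)\otimes T(t_0)$, the classical argument showing that the square of a Souslin tree is not c.c.c., transplanted to the two distinct cones. Concretely, along a club of levels I commit, at stage $\gamma$, a new pair $(u_\gamma,v_\gamma)$ with $u_\gamma>s_0$, $v_\gamma>t_0$ and $\hgt(u_\gamma)=\hgt(v_\gamma)=\gamma$, chosen so that the resulting sequence is pairwise incompatible in the product order; maintaining this throughout the recursion yields an uncountable antichain of $T(s_0)\otimes T(t_0)$ in the end. The correlation between the two cones is kept deliberately loose, so that it manufactures incompatible pairs only and in particular induces no level-isomorphism of $T(s_0)$ onto $T(t_0)$; this keeps $T$ totally rigid, as every form of the UBP requires.

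For the UBP across all finite products I would use the $\dia$-sequence to guess, at each countable limit $\lambda$, a code for a triple $(n,\dot c,\lambda)$ in which $\dot c$ is a nice $T^{n+1}$-name for a cofinal branch of $T$. At level $\lambda$ I then seal $\dot c$: unless the guessed branch is forced to coincide with one of the $n+1$ coordinate-generic branches, I extend the tree through level $\lambda$ so that the putative branch has no node at level $\lambda$, killing it. A single bookkeeping over all $n$ at once --- as in the standard construction of trees that are $n$-fold Souslin off the generic branch for every $n$ --- suffices, and a density argument then shows that no finite-product forcing adds a cofinal branch of $T$ beyond the generic ones.

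The main obstacle is to reconcile the two designs at the limit stages. The committed pairs $(u_\gamma,v_\gamma)$ that spoil $2$-freeness must be threaded through the limit levels without, on the one hand, coalescing into an extra cofinal branch in some $T^{n+1}$-extension, and, on the other, without violating the Souslin, normality, and $\aleph_0$-splitting requirements --- all while the branch-sealing steps are performed. In particular I must check that the deliberately non-c.c.c. product $T(s_0)\otimes T(t_0)$ neither collapses $\omega_1$ under any $T^{n+1}$ nor ever contributes a branch rather than merely antichains. Verifying this separation --- \emph{antichains are allowed, branches are forbidden} --- at every limit and uniformly in $n$ is the delicate heart of the argument.
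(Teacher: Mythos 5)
There is no proof in the paper to compare against: the text explicitly states that the methods for Theorem \ref{thm:not_simple_UBP} ``lie beyond the scope of this paper'' and defers to \cite[Theorem 1.6.3]{diss}, citing ideas from \cite{degrigST} and \cite{SAE}. Judged on its own, your proposal is a plausible outline of the right \emph{kind} of construction (a $\dia$-recursion that seals branch names while deliberately installing an uncountable antichain in the product of two cones), and you correctly isolate where the difficulty lives. But the two issues you defer to ``the delicate heart of the argument'' are the entire mathematical content of the theorem, and the sketch gives no indication of how to resolve them, so there is a genuine gap rather than a proof.

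Concretely: (1) The Fuchs--Hamkins sealing technique you invoke is calibrated to c.c.c.\ forcings. The moment $T$ fails to be $2$-free, $T^{\otimes 2}$ --- and hence every $T^{n+1}$ with $n\geq 1$ --- has uncountable antichains, so nice names for cofinal branches are no longer coded by countably many countable antichains, the $\dia$-sequence cannot be assumed to anticipate all relevant names on a stationary set of levels in the standard way, and the density argument that kills a guessed branch at level $\lambda$ has to be rebuilt from scratch; ``a single bookkeeping over all $n$ at once suffices'' is an assertion, not an argument. (2) Preservation of $\omega_1$ is not optional bookkeeping: if some $T^{n+1}$ collapses $\omega_1$, then $T$ becomes a normal, $\aleph_0$-splitting tree of countable limit height and acquires $2^{\aleph_0}$ cofinal branches, so the $n$-absolute UBP fails outright. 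You must therefore force with a non-c.c.c.\ poset of size $\aleph_1$ and still preserve $\omega_1$; the paper's own discussion shows how easily this fails (a self-specializing tree is UBP and not $2$-free, but its square collapses $\omega_1$, and the remark after Proposition \ref{prp:3-free+UBP} shows the naive candidate $T(s)\otimes T(t)$ also fails absolute UBP). Your construction must arrange that forcing through one cone adds an uncountable antichain to the other cone while that cone remains an $\omega_1$-preserving forcing below every node --- no mechanism for this is proposed. (3) A smaller but real point: keeping the committed pairs $(u_\gamma,v_\gamma)$ pairwise incompatible through limits requires guaranteeing, for $\gamma<\delta$, that $u_\gamma<u_\delta$ implies $v_\gamma\not<v_\delta$, and the set of $\gamma$ with $u_\gamma<u_\delta$ can be cofinal in $\delta$; since the two cones share no node above the root, the classical ``split at a common node'' argument for the non-c.c.c.-ness of $T\otimes T$ does not transplant as directly as you suggest.
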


The methods of proof for this theorem lie beyond the scope of this paper.
It uses ideas from \cite{degrigST} and \cite{SAE}.
A proof-sketch can be found in \cite[Theorem 1.6.3]{diss}.

\subsection{Further directions}

As a closing remark we mention how Diagram \ref{diagram},
which captures the implications between four families of rigidity notions and
implications between them, could possibly be extended.

\begin{description}
\item[Real rigidity] 
In \cite{abraham-shelah_aronszajn_trees} two Aronszajn trees are called
\emph{really different} if there is no isomorphism between any of their
restrictions to some club set of levels. In this vein, we could call a Souslin
tree 
\emph{really rigid} if all of its restrictions to club sets of levels are
rigid. This property is clearly stronger than ordinary rigidity yet
independent of total rigidity (cf. Remark \ref{rem:diagram}) and is implied
by the unique branching property. 
Also the variant of \emph{real, total
  rigidity} and the $n$-absolute versions of real and of real, total rigidity
could be considered. 

\item[Self-specializing trees] A normal tree $T$ of height $\omega_1$ is called \emph{special} if there is a
countable family $(A_n)_{n\in\omega}$ of antichains of $T$ that covers all of
$T$. As $T$ is uncountable, one of the $A_n$ has to be uncountable as well,
so a special tree $T$ is not Souslin.
On the other hand, every branch of $T$ meets each antichain $A_n$
in at most one node and is therefore countable.

A \emph{self-specializing tree} is a Souslin tree $T$ that specializes itself
by forcing a generic branch $b$ through it, i.e., in the generic extension
obtained by adjoining $b$ to the universe, the tree $T\setminus b$ is special.
Self-specializing trees can be found in models of $\dia$.
They are UBP: a second cofinal branch in $T$ would prevent $T\setminus b$
from being special. But of course they are not Souslin off the generic branch, 
and they can neither be 2-absolutely really rigid nor absolutely UBP,
because forcing with a special tree collapses $\omega_1$,
and in this second generic extension the limit levels of $T$ form an
$\aleph_0$-splitting tree of countable height which must be homogeneous
by a result of Kurepa (cf.\cite[p.102]{kurepa}). 

Now let us call a Souslin tree $T$ \emph{$n$-self-specializing} if it is
$n$-free (i.e. $(n\!-\!1)$-fold Souslin off the generic branch) and
forcing a generic branch $\vec{b}$ through $T^n$ makes $T\setminus\tilde{b}$
special where $\tilde{b}$ is the set of components of the elements of
$\vec{b}$. It is not yet verified but seems quite plausible
that one can construct an $n$-self-specializing tree under $\dia$.
In the implication diagram its place could be between $n$-free
and $(n\!-\!1)$-absolutely UBP, yet it is stronger than both of these
properties. And there would be no horizontal implications, for
an $n$-self-specializing tree is neither $(n\!-\!1)$-self-specializing nor
$(n\!+\!1)$-self-specializing.


\end{description}
As is clear from the outset, adding these families to Diagram
\ref{diagram} results in a far more complicated directed graph which is in
particular non-planar.
We leave such considerations for future work.

\subsection*{Acknowledgements}
Thanks are due to Piet Rodenburg for pointing out a flaw in the proof of Lemma
\ref{lm:opt_mtx_prt}.

\bibliographystyle{alpha}
\bibliography{souslin,buecher}

\end{document}